\newtheorem{thm}{Theorem}[section]
\newtheorem{lemma}[thm]{Lemma}
\newtheorem{prop}[thm]{Proposition}
\newtheorem{cor}[thm]{Corollary}
\newtheorem{conj}[thm]{Conjecture}
\newtheorem*{theorem*}{Claim}
 \theoremstyle{definition}
\newtheorem{defn}[thm]{Definition}
\theoremstyle{remark}
\newcommand{\rank}{\textup{rk }}
\newcommand{\spin}{\mathrm{Spin}^c}
\newcommand{\Rmnum}[1]{\expandafter\@slowromancap\romannumeral #1@}
\author[Yi Ni]{Yi Ni}
\thanks{}
\address {Department of Mathematics, California Institute of Technology, Pasadena, CA 91125}
\email{yini@caltech.edu}
\author[Faramarz Vafaee]{Faramarz Vafaee}
\thanks{}
\address {Department of Mathematics, California Institute of Technology, Pasadena, CA 91125}
\email{vafaee@caltech.edu}
\begin{document}
\title{Null surgery on knots in L-spaces}

\begin{abstract}
Let $K$ be a knot in an L-space $Y$ with a Dehn surgery to a surface bundle over $S^1$. We prove that $K$ is rationally fibered, that is, the knot complement admits a fibration over $S^1$. As part of the proof, we show that if $K\subset Y$ has a Dehn surgery to $S^1 \times S^2$, then $K$ is rationally fibered. In the case that $K$ admits some $S^1 \times S^2$ surgery, $K$ is Floer simple, that is, the rank of $\widehat{HFK}(Y,K)$ is equal to the order of $H_1(Y)$. By combining the latter two facts, we deduce that the induced contact structure on the ambient manifold $Y$ is tight.

In a different direction, we show that if $K$ is a knot in an L-space $Y$, then any Thurston norm minimizing rational Seifert surface for $K$ extends to a Thurston norm minimizing surface in the manifold obtained by the null surgery on $K$ (i.e., the unique surgery on $K$ with $b_1>0$).
\end{abstract}

\maketitle

\section{Introduction}\label{intro}
Heegaard Floer homology, introduced by Ozsv\'{a}th and Szab\'{o}, produces a package of invariants of three- and four-dimensional manifolds~\cite{OSzAnn1}. One example is $\widehat{HF}(Y)$, that associates a graded abelian group to a closed three-manifold $Y$. When $Y$ is a rational homology sphere, $\rank \widehat{HF}(Y) \ge \left |H_1(Y)\right |$~\cite{Ozsvath2004a}. If equality is achieved, $Y$ is called an \emph{L-space}. The name stems from the fact that lens spaces are L-spaces. More generally, all connected sums of manifolds with elliptic geometry are L-spaces~\cite{Ath}.

\subsection{Knots in L-spaces with fibered surgeries}

In an unpublished manuscript~\cite{Berge} Berge gave a conjecturally complete list of knots in $S^3$ admitting lens space fillings. The Berge conjecture roots in the classification of lens space surgeries on torus knots \cite{Moser1971}, followed by notable examples of lens space fillings on non-torus knots \cite{Rolfsen1977, Berge1991, Bleiler1989, FS1980, Gabai1989, Gabai1990, Wang1989, Wu1990}. In recent years, techniques from Heegaard Floer homology were applied to give deeper insight on the fiberedness~\cite{Ni2009}, positivity~\cite{Hedden2007}, and various notions of simplicity of knots in $S^3$ with lens space, or more generally L-space, surgeries~\cite{Ath, Hedden2011,HeddenBerge,RasmussenBerge}. The theme of the present work, in part, is to study the analogous properties of such knots when $S^3$ is replaced by $S^1 \times S^2$. It is often convenient to view the problem from the perspective of surgery along a knot in an L-space. Note that a knot $L\subset S^1 \times S^2$ on which Dehn surgery yields an L-space $Y$, induces a dual knot $K\subset Y$, the core of the surgery solid torus. By removing the interior of a neighborhood of $K\subset Y$ and undoing the original Dehn surgery, it follows that $K$ admits a surgery producing $S^1 \times S^2$. One way to obtain an example of a knot in an L-space with some $S^1 \times S^2$ surgery is as follows. Start with a solid torus $V = S^1 \times D^2$ with meridian $\mu$. Let $K\subset V$ be a \emph{Berge--Gabai} knot, i.e., $K$ has a non-trivial solid torus filling~\cite{Gabai1990}. Therefore, there is a slope $\alpha$ such that $V' = V_\alpha(K)$ is another solid torus with meridian $\mu'\ne\mu$. Note that Dehn filling $V$ along $\mu'$ will give a lens space $Z$. Then $K$, when viewed as a knot in $Z$, has an $S^1 \times S^2$ surgery; namely, $Z_\alpha(K)$ has a genus one Heegaard splitting with the property that the meridians of the two solid tori coincide (this common meridian is $\mu'$).

In~\cite{Baker2013}, Baker, Buck, and Lecuona proposed a classification of knots in $S^1 \times S^2$ with a longitudinal surgery to a lens space. Cebanu proved that the complement of a knot in $S^1 \times S^2$ that has a lens space filling, admits a fibration over the circle~\cite[Theorem~3.7.1]{Cebanu2012}. More precisely, he first proved that any knot $K$ in a lens space $Y$ with some $S^1 \times S^2$ surgery is \emph{Floer simple}. Moreover, $K$, as a knot in the lens space $Y$, lies in the homology class of a simple knot with some $S^1 \times S^2$ surgery. (See~\cite{HeddenBerge} for the definition of a simple knot in a lens space.) Such a simple knot is \emph{a priori} known to be fibered. Finally, he appealed to the fact that the complement of a Floer simple knot $K$ in the lens space $Y$ admits a fibration over $S^1$ if and only if the simple knot in the homology class of $[K]$ has a fibered complement over $S^1$~\cite[Corollary~5.3]{Ni2014}. We point out that Cebanu proved his result by checking all simple knots in lens spaces admitting $S^1\times S^2$ surgeries are fibered, and therefore, his proof is specific to the case of a lens space (and not an L-space in general). Building up on the work of J.~and~S.~Rasmussen~\cite{Rasmussen2015}, we give a novel proof of the more general case (obtained by replacing lens spaces with L-spaces).
\begin{thm}\label{thm:S1S2fiber}
Suppose $L\subset S^1\times S^2$ is a knot with some L-space surgery. Then the complement of $L$ in $S^1 \times S^2$ admits a fibration over $S^1$.
\end{thm}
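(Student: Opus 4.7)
The plan is to pass to the dual knot and prove a Floer-theoretic statement about it, then translate back.

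First, I would reformulate the theorem via the dual knot. Let $Y$ denote the L-space obtained by the given surgery on $L$, and let $K \subset Y$ be the core of the surgery solid torus. Undoing the surgery identifies the complement of $L$ in $S^1 \times S^2$ with the complement of $K$ in $Y$, so it is equivalent to prove that $K \subset Y$ is rationally fibered. By construction, $K$ itself admits a Dehn surgery producing $S^1 \times S^2$, and since $b_1(S^1 \times S^2) = 1$, that slope must be the rational longitude of $K$ (the unique slope whose filling has positive first Betti number).

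Second, I would show that $K$ is Floer simple, i.e.\ $\rank \widehat{HFK}(Y, K) = |H_1(Y)|$. The central input here is the rational surgery formula of Ozsv\'ath--Szab\'o, as reorganized by J.\ and S.\ Rasmussen~\cite{Rasmussen2015}, which expresses $\widehat{HF}$ of rational Dehn fillings of $Y \setminus \nu(K)$ in terms of the knot Floer chain complex $CFK^{\infty}(Y, K)$ and its associated $h$-function. The L-space hypothesis on $Y$ pins down one filling at the minimal possible rank, while the $S^1 \times S^2$ filling pins down another at the minimal rank compatible with $b_1 = 1$. Together these two extremal constraints should squeeze the $h$-function to its smallest possible values in each $\spin$ structure, forcing $K$ to be Floer simple.

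Third, I would extract rational fiberedness from Floer simplicity. For a Floer simple knot in an L-space, each $\spin$-restricted summand of $\widehat{HFK}(Y, K)$ has total rank one, so in particular the top Alexander grading in every $\spin$ structure is supported in rank one. By the rational analog of the Ghiggini--Ni fibered detection theorem, worked out in~\cite{Ni2014}, this condition is equivalent to $Y \setminus \nu(K)$ fibering over $S^1$, which completes the argument.

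The main obstacle will be the second step: extracting Floer simplicity from the mere existence of one $S^1 \times S^2$ surgery on $K$. Neither the L-space condition on $Y$ nor the topology of $S^1 \times S^2$ is individually strong enough; the crux is to use the Rasmussen--Rasmussen machinery to show that these two endpoints together leave no slack in the $h$-function. Once Floer simplicity is in place, the passage to fiberedness is, by contrast, essentially a direct invocation of the fibered detection theory for knots in L-spaces from~\cite{Ni2014}.
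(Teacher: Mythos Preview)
Your third step contains a genuine gap: Floer simplicity alone does not imply rational fiberedness. The detection theorem in \cite{Ni2014} (the statement actually used in this paper) says $K$ is rationally fibered if and only if the \emph{global} extremal Alexander grading of $\widehat{HFK}(Y,K)$ has total rank one, i.e.\ $\bigoplus_{\{\xi:\,A(\xi)=A_{\min}\}}\widehat{HFK}(Y,K,\xi)\cong\mathbb Z$. Floer simplicity only tells you that over each absolute $\mathfrak s\in\spin(Y)$ exactly one relative $\xi$ supports nonzero $\widehat{HFK}$, of rank one; it says nothing about how many of those $|H_1(Y)|$ generators land in the globally extremal Alexander grading. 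Several could, and then the detection criterion fails. This is not a hypothetical worry: the paper records (Theorem~\ref{GeneralFiberedness}, quoting \cite{Boyer2012}) that the implication ``Floer simple $\Rightarrow$ fibered'' is only known under the extra hypothesis that $K$ is primitive, and Conjecture~\ref{conj} is precisely the statement that the weaker semi-primitiveness condition suffices. So your step~3, as written, is asserting an open problem.

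Your step~2 is also more delicate than you suggest, and in fact the logical dependence in the paper runs the other way: Proposition~\ref{prop:FloerSimple} (Floer simplicity) is proved \emph{using} Theorem~\ref{thm:S1S2fiber}. Fiberedness is used to identify the minimal rational Seifert surface as a $p$--punctured sphere, giving $\|M\|=p-2$, after which $\phi(\mu)\ge p>\|M\|$ lets one invoke \cite[Lemma~3.2, Corollary~3.4]{Rasmussen2015}. Without already knowing fiberedness you have no control over $\|M\|$, and your sketch of squeezing the $h$--function from the two endpoint constraints does not supply one.

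The paper's actual route bypasses Floer simplicity entirely. One checks by a direct homology computation that $M=Y\setminus\nu^\circ(K)$ is semi-primitive, then \cite[Proposition~7.8]{Rasmussen2015} shows $M$ is a generalized solid torus, and \cite[Corollary~7.12]{Rasmussen2015} gives fiberedness immediately. The key point you are missing is that the Rasmussen--Rasmussen machinery already packages the passage from ``has both an $S^1\times S^2$ filling and an L-space filling'' to ``fibers'' once semi-primitiveness is verified; there is no need to go through $\widehat{HFK}$ at all.
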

\noindent If we replace $S^1\times S^2$ with $S^3$ in~Theorem~\ref{thm:S1S2fiber}, then we get the well-known result that a knot in $S^3$ which admits an L-space surgery is fibered~\cite[Corollary~1.3]{Ni2007}.

A knot $K\subset Y$ is {\it Floer simple} if $\rank\widehat{HFK}(Y,K)=\rank\widehat{HF}(Y)$. Floer simple knots in L-spaces often appear in the problem of L-space surgery. For example, if the $p$--surgery on a knot $L\subset S^3$ yields an L-space $Y$, then the dual knot of the surgery will be a Floer simple knot in $Y$ provided that $p$ is an integer greater than $2g(L)-1$ \cite{HeddenBerge,RasmussenBerge}. It turns out that a similar result holds in the case of $S^1\times S^2$ in place of $S^3$:
{\prop \label{prop:FloerSimple} If $K$ is a knot in an L-space $Y$ with some $S^1 \times S^2$ surgery, then $K$ is Floer simple. }
\\
{\defn Let $K$ be a rationally null-homologous oriented knot in an oriented closed three-manifold $Y$, $\nu(K)$ be a tubular neighborhood of $K$, and $\nu^{\circ}(K)$ denote the interior of $\nu(K)$. A properly embedded oriented surface $F\subset Y\setminus \nu^{\circ}(K)$ is called a {\it rational Seifert surface} for $K$, if $\partial F$ consists of coherently oriented parallel curves on $\partial\nu(K)$, $F$ has no closed component, and the orientation of $\partial F$ is coherent with the orientation of $K$. The knot $K$ is {\it rationally fibered} if the complement of $K$ in $Y$ fibers over $S^1$. In this paper, we often omit ``rationally'' when a knot is rationally fibered.}
\\

It is a well-known fact that, up to isotopy, there exists a unique simple closed curve $\alpha$ on $\partial \nu(K)$ with the property that the surgery on $K$ with slope $\alpha$ produces a manifold with the first Betti number one higher than that of $Y$. For simplicity of referring to this slope in the paper, we make the following definition:

{\defn\label{defn:NullSlope}For a knot $K\subset Y$, let $\alpha$ be the unique slope on $\partial \nu(K)$ that is rationally null-homologous in $Y\setminus \nu^{\circ}(K)$. We call $\alpha$ the \emph{null slope} of $K$ in $Y$. We also define the \emph{null surgery on $K$} to be Dehn filling the exterior of $K$ in $Y$ along the curve $\alpha$ and denote it $Y_\alpha$.}
\\

Theorem~\ref{thm:S1S2fiber} can be stated in terms of the dual knot $K$ of $L$ inside the L-space. More precisely, for an L-space $Y$, if the null surgery on $K\subset Y$ results in $S^1 \times S^2$, then $K$ is fibered. We generalize Theorem~\ref{thm:S1S2fiber} by replacing $S^1 \times S^2$ with an oriented closed three-manifold that is a surface bundle over $S^1$. Compare the following theorem with \cite[Corollary~1.4]{Ni2007}.

{\thm\label{SurfaceBundle} Let $K$ be a knot in an L-space $Y$. If the null surgery on $K$ is a surface bundle over $S^1$, then the complement of $K$ in $Y$ admits a fibration over $S^1$.}
\\

The above theorem does not hold for an arbitrary rational homology sphere $Y$. For example, we can choose a knot $K'\subset S^1\times S^2$ with nonzero winding number, such that the complement of $K'$ is not a surface bundle over $S^1$. Then any nontrivial surgery on $K'$ will be a rational homology sphere $Y$, and the null surgery on the dual knot $K\subset Y$ is $S^1\times S^2$, while $K$ is not rationally fibered.

When $K$ is a null-homologous knot in $Y$, Theorem~\ref{NormMinimizing} is just \cite[Corollary~1.4]{Ni2007}.
The idea of the proof of Theorem~\ref{SurfaceBundle} is inspired from that of \cite[Corollary~4.5]{Ozsvath2004}; also, a similar idea is used to prove~\cite[Corollary~1.4]{Ni2007}. The heart of the argument lies in showing that, for an appropriately chosen Spin$^c$ structure, the plus version of Heegaard Floer homology of $Y_\alpha$ is isomorphic to the hat version of knot Floer homology of $K$ in its bottommost Alexander grading. This is achieved by comparing two exact triangles which differ at only one vertex, and the groups at these distinguished vertices are the two homology groups we aim to prove are isomorphic.
See Section~\ref{section:Background} for the relevant definitions.

Since we work with rationally null-homologous knots instead of null-homologous knots as in \cite[Corollary~1.4]{Ni2007}, we encounter new difficulties. One difficulty is that the null slope is not necessarily a framing, thus we do not directly have the exact triangles we want. To solve this problem, we use a trick from \cite{Ozsvath2010} to present the null-surgery as a Morse surgery on the connected sum of $K$ and a knot in a lens space. A simple combinatorial argument (Corollary~\ref{cor:ReduceToMorse}) shows that we can reduce the general case to this special case of Morse surgery.
Another difficulty is that different Spin$^c$ structures over $Y$ may intertwine in the maps of the exact triangles. To solve this problem, we need to carefully analyze the Spin$^c$ structures. A key technical result we use is Lemma~\ref{lem:OneToOne}, which controls the interwining of the Spin$^c$ structures.

Since $Y_\alpha$ is a surface bundle over $S^1$, its Floer homology in the specified Spin$^c$ structure is of rank one. Therefore, the knot Floer homology of $K$ in its bottommost grading will be of rank one. That is, $K$ is fibered. Following from the proof of Theorem~\ref{SurfaceBundle}, we get that:
{\thm\label{NormMinimizing} Let $K$ be a knot in an L-space $Y$ with the null slope $\alpha$. For a Thurston norm minimizing rational Seifert surface $F$ of $K$, the extension $\widehat F$ in $Y_\alpha$ of $F$ is also Thurston norm minimizing.
}
\\

\noindent Theorem~\ref{NormMinimizing} generalizes a similar result of Gabai \cite[Corollary~5]{Gabai1986} where $Y = S^3$.

\subsection{Fibered, Floer simple knots and the rational-valued $\tau$ invariant}

In~\cite{Ozsvath2003}, Ozsv\'{a}th and Szab\'{o} introduced an invariant $\tau(K)$ associated to a knot $K \subset S^3$. (See also~\cite{Rasmussen2003}.) In Section~\ref{section:Background}, we define this invariant for a knot in a rational homology sphere $Y$, analogous to the integer-valued invariant in the case $Y=S^3$. The difference, in this more general setting, is that there will be as many $\tau$ invariants as the number of Spin$^c$ structures on $Y$. Moreover, since the invariant, by definition, is a function of the Alexander grading of the generators of $\widehat{CFK}(Y, K)$, the values that $\tau$ takes will be rational.

In \cite{Ni2009}, the first author defines an affine function
\[
\mathfrak H : \underline{\textup{Spin}^c}(Y, K) \to H^2(Y, K; \mathbb Q),
\]
which is basically one half of the first Chern class, shifted by an appropriate cohomology class.
The knot Floer homology provides a function
\[
y : H_2(Y, K; \mathbb Q) \to \mathbb Q,
\]
defined by
\begin{equation} \label{WidthEquation}
y(h) = \max_{\left \{ \xi \in \underline{\textup{Spin}^c}(Y, K) : \widehat{HFK}(Y, K, \xi) \ncong 0 \right \}}\langle \mathfrak H (\xi), h \rangle.
\end{equation}
When $Y=S^3$ and $h$ is a generator of $H_2(Y, K; \mathbb Z)$ (e.g. represented by a Seifert surface for $K$), it follows that $y(h)= g(K)$. If $K\subset Y$ is fibered (e.g. when $Y$ is an L-space and $K$ admits an $S^1 \times S^2$ surgery; c.f. Theorem~\ref{thm:S1S2fiber}),  we get a contact structure $\xi_K$ compatible with the rational open book decomposition specified by $(Y, K)$.

\begin{prop}
\label{prop:TightnessGeneralization} Let $K$ be a fibered, Floer simple knot in a rational homology sphere $Y$, endowed with a rational Seifert surface $F$. The following two equivalent statements hold:
\begin{itemize}
\item[(1)] The contact structure induced by the rational open book decomposition corresponding to the fibration of $(Y, K)$ is tight.
\item[(2)] For some $\textup{Spin}^c$ structure $\mathfrak{s}$ of $Y$, $\displaystyle \tau(Y,K,\mathfrak s) = \frac{y([F])}{|[\mu]\cdot[\partial F]|}$, where $\tau(Y,K,\mathfrak s)$ is as in Definition~\ref{def:RationalTau}, $[F] = h \in H^2(Y, K; \mathbb Q)$ is the homology class needed in Equation~\eqref{WidthEquation}, and $[\mu]\cdot[\partial F]$ is the intersection number of the meridian $\mu$ of $K$ with $\partial F$ in $\partial\nu(K)$.
\end{itemize}
\end{prop}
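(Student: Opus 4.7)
The plan is to identify both (1) and (2) with the non-vanishing of the Ozsv\'ath--Szab\'o contact invariant $c(\xi_K)\in\widehat{HF}(-Y)$, and to verify that both hold in the fibered, Floer simple setting. The key input is the chain-level description of $c(\xi_K)$ for rational open book decompositions: because $K$ is rationally fibered, there is a canonical cycle $\widehat{c}\in\widehat{CFK}(Y,K)$ sitting in the extremal Alexander grading determined by the page $F$, and its class $[\widehat{c}]$ represents $c(\xi_K)$ in $\widehat{HF}(-Y)$.

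First I would exploit Floer simplicity. The general inequality $\rank\widehat{HFK}(Y,K)\ge\rank\widehat{HF}(Y)$ comes from the spectral sequence whose $E_1$ page is $\widehat{HFK}$ and whose $E_\infty$ page computes $\widehat{HF}$, so equality forces this spectral sequence to collapse at $E_1$. Every nonzero class in $\widehat{HFK}(Y,K,\xi)$ thus represents a nonzero class in $\widehat{HF}(Y,\mathfrak s)$, where $\mathfrak s$ is the Spin$^c$ structure underlying $\xi$. Applied to $\widehat{c}$, this gives $c(\xi_K)\ne 0$, and since non-vanishing of the contact invariant implies tightness, we conclude~(1).

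Next I would extract~(2) by unwinding the definition of the rational $\tau$-invariant. Up to the factor $|[\mu]\cdot[\partial F]|$ converting between the pairing $\langle\mathfrak H(\xi),[F]\rangle$ and the normalized Alexander grading used in Definition~\ref{def:RationalTau}, $\tau(Y,K,\mathfrak s)$ is the maximum Alexander grading of a cycle in $\widehat{CFK}(Y,K)$ whose projection to $\widehat{CF}(Y,\mathfrak s)$ is a generator, whereas $y([F])/|[\mu]\cdot[\partial F]|$ is the same maximum taken over \emph{all} relative Spin$^c$ structures with $\widehat{HFK}(Y,K,\xi)\ne 0$. Fiberedness pins the latter maximum down to the relative Spin$^c$ class carrying $\widehat{c}$; choosing $\mathfrak s$ to be the Spin$^c$ structure beneath that class, the collapse of the spectral sequence identifies the two maxima, giving~(2).

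The main obstacle will be verifying the rational open book analogue $c(\xi_K)=[\widehat c]$ in the rationally null-homologous setting, for which I would adapt the classical argument (via a contact-compatible Heegaard diagram produced from an open book page) to the rational setting, while carefully tracking the conversion factor $|[\mu]\cdot[\partial F]|$ between the two natural normalizations of Alexander grading. Once these inputs are secured, the equivalence (1)$\Longleftrightarrow$(2) is tautological, since both conditions reduce to the single statement $[\widehat c]\ne 0$ in $\widehat{HF}(-Y)$.
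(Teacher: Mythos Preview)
Your approach is essentially the paper's: Floer simplicity collapses the knot Floer spectral sequence, so the bottom class survives to give $c(\xi_K)\ne 0$ (hence tightness), and the topmost Alexander grading is realized as $\tau(Y,K,\mathfrak s)$ for the underlying Spin$^c$ structure. Two small corrections: the ``main obstacle'' you flag---identifying $c(\xi_K)$ with the image of the bottom filtered generator in the rationally null-homologous case---is already the content of Hedden--Plamenevskaya \cite{Hedden2011} as recalled in Section~\ref{sec:Contact}, so there is nothing to re-prove; and for (2) you should not route through fiberedness or $\widehat c$ at all, since Floer simplicity alone forces $\tau(Y,K,\mathfrak s)=A_{\mathrm{top}}$ for the appropriate $\mathfrak s$, after which the identification $A_{\mathrm{top}}=y([F])/|[\mu]\cdot[\partial F]|$ is exactly Lemma~\ref{ChernClassFormula} together with \cite[Theorem~1.1]{Ni2009}.
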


When $Y=S^3$, Proposition~\ref{prop:TightnessGeneralization} reduces to~\cite[Items~(2)~and~(4)~of~Proposition~2.1]{Hedden2007}. The main ingredient used in the proof of Proposition~\ref{prop:TightnessGeneralization} is the non-vanishing of the Heegaard Floer contact invariant associated to $K$. Hedden and Plamenevskaya, in \cite{Hedden2011}, introduced a contact invariant for a fibered knot $K$ in a closed three-manifold $Y$. The invariant is the image of the generator of the homology of the bottom filtered subcomplex in the Heegaard Floer homology of $Y$ under the natural map
\[
\widehat{HFK}(-Y, K,\mathrm{bottommost}) \to \widehat{HF}(- Y),
\]
where $- Y$ is the manifold $Y$ with opposite orientation. To prove Proposition~\ref{prop:TightnessGeneralization}, it will be straightforward to check that the Heegaard Floer contact invariant associated to $K$ is non-zero, and therefore, the contact structure induced by $K$ is tight \cite{Hedden2011}.

From the proof of Proposition~\ref{prop:TightnessGeneralization}, we get the following corollary that may be of independent interest:
\begin{cor} \label{FloerSimpleTau} Let $K$ be a fibered, Floer simple knot in a rational homology sphere $Y$, endowed with a rational Seifert surface $F$. There exists a Spin$^c$ structure $\mathfrak{s}$ on $Y$ such that $$\displaystyle \tau(Y,K,\mathfrak s)= \frac{y([F])}{|[\mu]\cdot[\partial F]|}.$$
\end{cor}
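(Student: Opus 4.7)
The plan is to extract the equality directly from the proof of Proposition~\ref{prop:TightnessGeneralization}: that argument establishes item (1) of the proposition by first showing the Hedden--Plamenevskaya contact invariant of $K$ is non-zero, and this intermediate non-vanishing step is already equivalent to item (2). In particular, the corollary is obtained without ever needing the converse direction ((1) $\Rightarrow$ (2)); the $\tau$ formula falls out as soon as one knows that fiberedness and Floer simplicity force the extremal generator of $\widehat{HFK}(Y,K)$ to survive to $\widehat{HF}(Y)$.

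First, I would use fiberedness of $K$: the extremal Alexander-graded summand of $\widehat{HFK}(Y,K)$, with respect to the filtration induced by $F$, is supported in a unique relative $\spin$ structure $\xi_0 \in \relspin(Y,K)$ and has rank one, generated by a cycle $c$. Next, I would invoke Floer simplicity. Since $\rank \widehat{HFK}(Y,K) = \rank \widehat{HF}(Y)$, the spectral sequence from $\widehat{HFK}(Y,K)$ to $\widehat{HF}(Y)$ must collapse at the $E_1$-page --- any non-trivial differential would strictly decrease the total rank. Consequently, every generator of $\widehat{HFK}(Y,K)$ survives to a non-zero class in $\widehat{HF}(Y)$; in particular, $c$ does. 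Letting $\mathfrak{s} \in \spin(Y)$ denote the absolute $\spin$ structure underlying $\xi_0$, it follows directly from the definition of $\tau(Y, K, \mathfrak{s})$ as the Alexander grading of the surviving generator in $\spin$ structure $\mathfrak{s}$ that $\tau(Y, K, \mathfrak{s})$ equals the Alexander grading of $c$.

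Finally, I would identify the Alexander grading of $c$ with $\frac{y([F])}{|[\mu] \cdot [\partial F]|}$. Because $\xi_0$ realizes the extremum in Equation~\eqref{WidthEquation}, we have $y([F]) = \langle \mathfrak{H}(\xi_0), [F] \rangle$. Unpacking the definition of $\mathfrak{H}$ (one half the first Chern class, shifted by a fixed cohomology class) and the standard relation between $\mathfrak{H}(\xi_0)$ and the rational Alexander grading of $c$ --- which in the rationally null-homologous setting is normalized by the meridional intersection $|[\mu] \cdot [\partial F]|$ --- yields the desired equality.

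The main obstacle lies in this last identification: carefully tracking the relative-to-absolute $\spin$ structure correspondence, the orientation of $F$ (which determines whether the relevant extremum is attained at the ``top'' or ``bottom'' of the Alexander filtration), and especially the normalization factor $|[\mu] \cdot [\partial F]|$. This factor encodes the rational nature of the Alexander grading in the rationally null-homologous setting, and is the main point of departure from the classical $Y = S^3$ situation; every other step in the argument is a formal consequence of the $E_1$-collapse forced by Floer simplicity.
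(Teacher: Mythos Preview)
Your proposal is correct and follows essentially the same route as the paper: the corollary is explicitly stated as a byproduct of the proof of Proposition~\ref{prop:TightnessGeneralization}, and the paper's argument there is precisely the spectral-sequence collapse forced by Floer simplicity, followed by the identification $A_{\text{topmost}}=\frac{y([F])}{|[\mu]\cdot[\partial F]|}$ via the formula~\eqref{AlexanderGrading} for the Alexander grading. One small remark: your appeal to fiberedness (to single out a rank-one extremal generator $c$) is unnecessary for this corollary---the paper's computation of $\tau$ uses only Floer simplicity, and you must in any case take $\xi_0$ to be the \emph{topmost} relative $\spin$ structure (not merely ``extremal''), since Definition~\ref{def:RationalTau} asks for the minimum $m$ with $\widehat{HF}(Y,\mathfrak s)\subset\mathrm{im}\,(\iota_m)_*$, which picks out the highest Alexander grading among generators lying over $\mathfrak s$.
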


Combining Theorem~\ref{thm:S1S2fiber} and Propositions~\ref{prop:TightnessGeneralization},~\ref{prop:FloerSimple}, we get the following theorem:
\begin{thm} \label{thm:S1S2EquivState} Let $K$ be a knot in an L-space $Y$ such that $K$ admits an $S^1 \times S^2$ surgery. Let also $F$ be a minimal genus rational Seifert surface for $K$. The following two statements hold:
     \begin{itemize}
          \item[(1)] $c(\xi_K) \ne 0$, where $c(\xi_K)$ is the Heegaard Floer contact invariant associated to the contact structure $\xi_K$ coming from the open book of $(Y, K)$.
          \item[(2)] There exists a $\textup{Spin}^c$ structure $\mathfrak{s}$ on $Y$ such that $K$ satisfies $\displaystyle \tau(Y,K,\mathfrak s)= \frac{y([F])}{|[\mu]\cdot[\partial F]|}$.
     \end{itemize}
\end{thm}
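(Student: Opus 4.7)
The plan is to assemble the theorem from the three preceding ingredients: Theorem~\ref{thm:S1S2fiber}, Proposition~\ref{prop:FloerSimple}, and Proposition~\ref{prop:TightnessGeneralization}. First, since $Y$ is an L-space it is a rational homology sphere, so $b_1(Y)=0$; because $b_1(S^1\times S^2)=1$, the hypothesized $S^1\times S^2$ surgery on $K$ strictly increases $b_1$ and hence, by the uniqueness stated right before Definition~\ref{defn:NullSlope}, must be the null surgery on $K$. The dual reformulation of Theorem~\ref{thm:S1S2fiber} (stated explicitly in the text immediately after Definition~\ref{defn:NullSlope}) then yields that $K$ is rationally fibered, and Proposition~\ref{prop:FloerSimple} gives that $K$ is Floer simple. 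Thus $K$ meets the hypotheses of Proposition~\ref{prop:TightnessGeneralization}.

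For statement (1), the plan is to prove $c(\xi_K)\ne 0$ by exploiting Floer simplicity in each $\spin$ summand. Since $K$ is fibered, the bottommost filtered piece of $\widehat{HFK}(-Y,K)$ has rank one in the $\spin$ structure $\mathfrak t$ of $-Y$ that supports it, and by the Hedden--Plamenevskaya definition its generator maps to $c(\xi_K)\in\widehat{HF}(-Y,\mathfrak t)$ under the natural map. Now $-Y$ is again an L-space and $K$ remains Floer simple in $-Y$, so
\[
\rank\widehat{HFK}(-Y,K)=|H_1(Y)|=\rank\widehat{HF}(-Y).
\]
Because $\widehat{HF}(-Y,\mathfrak s)$ has rank one in every $\spin$ structure $\mathfrak s$, Floer simplicity forces $\rank\widehat{HFK}(-Y,K,\mathfrak s)$ to equal one in each $\spin$ summand as well. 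The spectral sequence from $\widehat{HFK}(-Y,K)$ to $\widehat{HF}(-Y)$ therefore collapses at the $E_1$-page in each $\spin$ piece, so the natural map is an isomorphism on every summand; in particular it sends the rank-one generator of the bottom in summand $\mathfrak t$ to a non-zero class, proving $c(\xi_K)\ne 0$.

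Statement (2) is then essentially free. By \cite{Hedden2011}, $c(\xi_K)\ne 0$ implies that $\xi_K$ is tight, which is condition (1) of Proposition~\ref{prop:TightnessGeneralization}. Since conditions (1) and (2) in that proposition are equivalent for fibered, Floer simple knots, the $\tau$ identity in condition (2) also holds, giving (2) of the theorem. The main delicacy in this plan is the $\spin$ bookkeeping in the middle step---verifying that the generator of $\widehat{HFK}(-Y,K,\mathrm{bottommost})$ carrying the contact invariant really lies in a $\spin$ summand where the natural map $\widehat{HFK}\to\widehat{HF}$ is non-zero, given that the bottommost part is defined with respect to the Alexander filtration and must be matched with the ambient $\spin$ structure coming from $\xi_K$. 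Once this identification is in place, every remaining step is a direct appeal to results already established in the excerpt.
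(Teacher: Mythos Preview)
Your proposal is correct and follows essentially the same route as the paper: establish that $K$ is fibered (Theorem~\ref{thm:S1S2fiber}) and Floer simple (Proposition~\ref{prop:FloerSimple}), then invoke Proposition~\ref{prop:TightnessGeneralization}. The paper states this as a one-line deduction, and your argument for $c(\xi_K)\ne0$ via the collapse of the knot Floer spectral sequence is exactly the content of the proof of Proposition~\ref{prop:TightnessGeneralization} (phrased there via the reduced complex $C'$ with vanishing differential). One small redundancy: once fiberedness and Floer simplicity are in hand, the proof of Proposition~\ref{prop:TightnessGeneralization} already yields $c(\xi_K)\ne0$ and the $\tau$ identity directly, so there is no need to first prove (1), deduce tightness, and then pass through the equivalence to get (2). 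Your concern about $\spin$ bookkeeping is unwarranted: the rank argument works uniformly in every $\spin$ summand, so the inclusion-induced map is injective on the bottom piece regardless of which $\mathfrak s$ supports it.
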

Indeed, it follows that for a fibered, Floer simple knot $K$ in a rational homology sphere $Y$, the two conclusions of the theorem are equivalent.

\subsection{Notation}\label{sec:Notation}
We fix some notation that will be used throughout the paper. The singular homology and cohomology groups are all taken over the ring of integers $\mathbb Z$, unless a different coefficient ring is specified. Unless noted otherwise, $Y$ denotes a rational homology sphere. We let $K$ be an oriented knot in $Y$, and $M = Y \setminus \nu^{\circ}(K)$. We choose an oriented longitude $\lambda \in H_1(\partial M)$ whose orientation is coherent with the orientation of $K$. Let $\mu \in H_1(\partial M)$ be a meridian of $K$ with the property that $\mu \cdot \lambda = 1$ with respect to the orientation on $\partial M$ induced by $\partial \nu (K)$. Let $Y_n$ denote the manifold obtained by Dehn filling $M$ along the curve $n \cdot \mu + \lambda$. In particular, $Y_0$ denotes the filling of $M$ (surgery on $K$) along $\lambda$. The null slope of $K \subset Y$ is denoted $\alpha$, and that the surgery on $K$ with slope $\alpha$ is denoted $Y_\alpha$. Lastly, we often use the terms ``longitude" and ``framing": both refer to a slope at distance one from the meridian $\mu$.

\subsection{Organization}\label{sec:Organization} The rest of the paper is organized as follows. Section~\ref{section:Background} provides background from Heegaard Floer homology. Section~\ref{TightnessGeneralization} proves Proposition~\ref{prop:TightnessGeneralization}. Section~\ref{sec:S1S2} proves Theorem~\ref{thm:S1S2fiber} and Proposition~\ref{prop:FloerSimple}. Section~\ref{Thurston-norm} is devoted to some preliminary lemmas, followed by the proof of Theorems~\ref{SurfaceBundle} and~\ref{NormMinimizing}. The final section addresses potential directions for future research.

\subsection*{Acknowledgements} We are grateful to Kenneth Baker for pointing out Remark~\ref{S1S2Overtwisted} to us, to Matthew Hedden for his input to Proposition~\ref{prop:TightnessGeneralization}, to Tye Lidman for helpful conversations, and to Jacob Rasmussen for pointing out a mistake in an earlier draft. We thank the referee for valuable remarks and a thoughtful review. Y.~N. was partially supported by NSF grant
numbers DMS-1103976, DMS-1252992, and an Alfred P. Sloan Research Fellowship; F.~V. was partially supported by an NSF Simons travel grant.


\section{Background}\label{section:Background}

In this section we provide the Heegaard Floer homology background en route to proving the main results of the paper.

\subsection{Knot Floer homology}\label{subsec:HFK}

The primary goal of this subsection is to recall the construction of  knot Floer homology. We start by briefly reviewing the construction of a doubly pointed Heegaard diagram for a knot $K$ in a closed three-manifold $Y$ \cite{Ozsvath2004,Ozsvath2010}. Throughout the subsection, we mainly use the notation of \cite{Ozsvath2010}.

Let $(\Sigma, \mbox{\boldmath${\alpha}$},\mbox{\boldmath$\beta$}, w, z)$ be a doubly pointed Heegaard diagram for $K \subset Y$, in the following sense. Here, $\Sigma$ is an oriented surface of genus $g$, $\mbox{\boldmath${\alpha}$} = \left \{\alpha_1, \cdot \cdot \cdot , \alpha_g\right \}$ is a $g$-tuple of homologically linearly independent, pairwise disjoint, simple closed curves in $\Sigma$, so is $\mbox{\boldmath${\beta}$} = \left \{\beta_1, \cdot \cdot \cdot , \beta_g\right \}$. The two points $w$ and $z$ lie on $$\Sigma -\alpha_1 -\cdot \cdot \cdot -\alpha_g -\beta_1 -\cdot \cdot \cdot -\beta_g.$$ The curves $\mbox{\boldmath$\alpha$}$ and $\mbox{\boldmath$\beta$}$ specify a pair of handlebodies $U_{\mbox{\boldmath$\alpha$}}$ and $U_{\mbox{\boldmath$\beta$}}$ with common boundary $\Sigma$. We require that $(\Sigma, \mbox{\boldmath${\alpha}$},\mbox{\boldmath$\beta$},  w)$ is a Heegaard diagram for $Y$, and also that the knot $K$ is the union of two arcs $K_{\alpha},K_{\beta}$, where $K_{\alpha}\subset U_{\mbox{\boldmath$\alpha$}}$ is an unknotted arc connecting $z$ to $w$ and is disjoint from the disks attached to $\alpha_1,\dots,\alpha_g$, and $K_{\beta}\subset U_{\mbox{\boldmath$\beta$}}$ is an unknotted arc connecting $w$ to $z$ and is disjoint from the disks attached to $\beta_1,\dots,\beta_g$.

Spin$^c$ structures on $Y$ can be seen as {\it homology classes} of non-vanishing
vector fields forming an affine space over $H^2(Y)$. Two nowhere vanishing vector fields on $Y$ are homologous if they are homotopic on the complement of a ball embedded in $Y$. From the combinatorics of the Heegaard diagram one can construct a function
$$\mathfrak s_w : \mathbb T_{\mbox{\boldmath{$\alpha$}}} \cap \mathbb T_{\mbox{\boldmath{$\beta$}}} \to \mathrm{Spin}^c(Y),$$
where $\mathbb T_{\mbox{\boldmath{$\alpha$}}}$ and $\mathbb T_{\mbox{\boldmath{$\beta$}}}$ are
two totally real half-dimensional tori in the symmetric product $\text{Sym}^g(\Sigma)$ which is endowed with an almost
complex structure. The map $\mathfrak s_w$ sends an intersection point $\mathbf x$ to the homology class of a vector field. There is also a relative version $\underline{\mathrm{Spin}^c} (Y, K)$. It consists of homology classes of vector fields on the knot complement $M$ which point outwards at the boundary; one has an analogous map
\[
\underline{\mathfrak s}_{w,z} : \mathbb T_{\mbox{\boldmath{$\alpha$}}} \cap \mathbb T_{\mbox{\boldmath{$\beta$}}} \to \underline{\mathrm{Spin}^c}(Y, K).
\]
There is another equivalent definition of relative Spin$^c$ structure in the literature \cite{OSzLink}, where the boundary condition is that the vector field on $\partial M$ is the (up to isotopy) canonical vector field tangent to $\partial M$. Let $\xi\in \underline{Spin^c}(Y, K)$ be represented by the homology class of a vector field $v$. The Spin$^c$ structure $[-v]$, denoted $J(\xi)$, is called {\it the conjugate} of $\xi$. It is clear that
\begin{equation}\label{conjugate}
c_1(J(\xi))= -c_1(\xi).
\end{equation}
Equivalently, a relative Spin$^c$ structure on $(Y, K)$ is a nowhere vanishing vector field on $Y$ that contains $K$ as a closed orbit. Similar to the closed case, $\underline{\mathrm{Spin}^c}(Y, K)$ is an affine space over $H^2(Y, K)$. There is a natural map
\[
G_{Y, K}: \underline{\mathrm{Spin}^c}(Y, K) \to \mathrm{Spin}^c(Y)
\]
which is equivariant with respect to the action by $H^2(Y, K)$. That is, letting
\[
\iota:H^2(Y, K) \to H^2(Y)
\]
be the map induced by the inclusion, we have for each $a\in H^2(Y,K)$
\begin{equation}\label{eq:Equivariant}
G_{Y, K}(\xi+a) = G_{Y, K}(\xi)+\iota(a).
\end{equation}

Given a doubly pointed Heegaard diagram $(\Sigma, \mbox{\boldmath${\alpha}$},\mbox{\boldmath$\beta$},w, z)$ which represents a rationally null-homologous knot $K \subset Y$ and $\xi \in \underline{\text{Spin}^c}(Y, K)$, Ozsv\'{a}th and Szab\'{o} construct a $(\mathbb Z \oplus \mathbb Z)$--filtered chain complex $CFK^{\infty}(Y,K, \xi)$.
The generating set is the subset $\mathfrak{T}(\xi) \subset \mathbb T_{\mbox{\boldmath{$\alpha$}}} \cap \mathbb T_{\mbox{\boldmath{$\beta$}}} \times \mathbb Z \times \mathbb Z$ consisting of all elements $[\mathbf x, i, j]$ with the property that
\begin{equation}\label{SpinStrFormula}
\underline{\mathfrak s}_{w,z}(\mathbf x)+(i-j)PD[\mu]= \xi.
\end{equation}
 The differential counts certain pseudo-holomorphic disks connecting the generators with the boundary mapping to $\mathbb T_{\mbox{\boldmath{$\alpha$}}} \cup \mathbb T_{\mbox{\boldmath{$\beta$}}}$. The two basepoints $w$ and $z$ give rise to codimension $2$ submanifolds $\left \{w \right \} \times\mathrm{ Sym}^{g-1}(\Sigma)$, respectively $\left \{z\right \} \times \mathrm{Sym}^{g-1}(\Sigma)$ of $\mathrm{Sym}^g(\Sigma)$. More precisely, the chain complex is endowed with the differential
\[
\displaystyle \partial^{\infty}[\mathbf x,i,j]=\sum_{\mathbf y\in\mathbb T_{\mbox{\boldmath{$\alpha$}}} \cap \mathbb T_{\mbox{\boldmath{$\beta$}}}}\sum_{\{\phi\in\pi_2(\mathbf x,\mathbf y)|\mu(\phi)=1\}}\#(\widehat{\mathcal M}(\phi))[\mathbf
y,i-n_{w}(\phi), j-n_{z}(\phi)]
\]
where $\pi_2(\mathbf x, \mathbf y)$ denotes the set of homotopy classes of Whitney disks connecting $\mathbf x$ and $\mathbf y$, $\mu (\phi)$ is the
Maslov index of $\phi$, $\#(\widehat{\mathcal M}(\phi))$ is the count of holomorphic representatives of $\phi$, $n_w(\phi) = \# \phi \cap \left \{w\right \} \times \text{Sym}^{g-1}(\Sigma)$, and similarly for $n_z(\phi)$.
If $[\mathbf x, i, j]\in \mathfrak{T}(\xi)$ and $\phi\in \pi_2(\mathbf x, \mathbf y)$, then $[\mathbf
y,i-n_{w}(\phi), j-n_{z}(\phi)] \in \mathfrak{T}(\xi)$. The map
\[
\mathcal F: \mathfrak{T}(\xi)\to \mathbb Z \oplus\mathbb Z,
\]
where $\mathcal F([\mathbf x, i, j])= (i,j)$ induces a $\mathbb Z\oplus\mathbb Z$ filtration on $CFK^{\infty}(Y, K, \xi)$. Although, by construction, the chain complex depends on the choice of a doubly pointed Heegaard diagram and also a representative of $\xi$, Ozsv\'ath and Szab\'o
proved that its filtered chain homotopy type is an invariant of the triple $(Y, K, \xi)$, as the notation suggests. Let $\widehat{CFK}(Y, K, \xi)$ be the sub-quotient complex of $CFK^{\infty}(Y, K, \xi)$ with $i=j=0$, endowed with the induced differential $\widehat \partial$.
Its homology, denoted $\widehat{HFK}(Y, K, \xi)$, is trivial for all but finitely many $\xi \in \underline{\textup{Spin}^c}(Y, K)$.

The knot Floer homology $\widehat{HFK}(Y, K)$ is a finitely generated abelian group (with an absolute grading) that  decomposes as a direct sum
\[
\widehat{HFK}(Y, K) \cong \bigoplus_{\xi \in \underline{\mathrm{Spin}^c}(Y, K)}\widehat{HFK}(Y, K, \xi).
\]

\subsection{The rational-valued $\tau$ invariant}\label{sec:TauDef} In this subsection we define the rational-valued $\tau$ invariant associated to a knot $K$ in a rational homology sphere $Y$. Suppose that $F$ is a rational Seifert surface for $K$. As in Subection~\ref{subsec:HFK}, let $(\Sigma, \mbox{\boldmath${\alpha}$},\mbox{\boldmath$\beta$}, w, z)$ be a doubly pointed Heegaard diagram for $(Y, K)$, $\mathfrak s\in \mathrm{Spin}^c(Y)$, $\xi\in\underline{\mathrm{Spin}^c}(Y,K)$.
Set $$\mathcal B_{Y,K}=\left\{\xi\in\underline{\mathrm{Spin}^c}(Y,K)\left|\:\widehat{HFK}(Y,K,\xi)\ne0\right.\right\}.$$
There exists a unique affine map $$A : \underline{\mathrm{Spin}^c}(Y,K) \to \mathbb{Q}$$ satisfying
\begin{equation}\label{eq:AlexanderDifference}
A(\xi_1)-A(\xi_2)=\frac{\langle \xi_1-\xi_2,[F, \partial F]\rangle}{|[\partial{F}]\cdot[\mu]|},
\end{equation}
and
\begin{equation}\label{eq:Aminmax}
\max\{A(\xi)|\:\xi\in\mathcal B_{Y,K}\}=-\min\{A(\xi)|\:\xi\in\mathcal B_{Y,K}\}.
\end{equation}
In fact, we can define $A$ as
\begin{equation}\label{AlexanderGrading}
A(\xi)=\frac{\langle c_1(\xi),[F, \partial F]\rangle- [\mu] \cdot [\partial F]}{2[\mu] \cdot [\partial F]}.
\end{equation}
We refer to $A$ as the \emph{Alexander grading}. Note that $A$ does not depend on the choice of a rational Seifert surface. The Alexander grading gives rise to a filtration $\mathcal F$ on $\widehat{CF}(Y)$ in the standard way, i.e. we let
$$\mathcal F(Y,K,m)=\bigoplus_{\{\xi\in\underline{\mathrm{Spin}^c}(Y,K),\:A(\xi)\le m\}}\widehat{CFK}(Y,K,\xi),\quad m\in\mathbb Q.$$
Positivity of intersections of $J$-holomorphic Whitney disks with the hypersurfaces determined by $z$ and $w$ ensures that $\mathcal F(m)$ is a subcomplex; that is, $\widehat \partial \mathcal F (m) \subset \mathcal F(m)$ and hence $\mathcal F$ defines a filtration. We have the following finite sequence of inclusions
\begin{equation}\label{eq:filtration}
0=F(Y,K,-j) \hookrightarrow F(Y,K,-j+1) \hookrightarrow \cdots F(Y,K,n)=\widehat{CF}(Y),
\end{equation}
where the finiteness of the sequence follows from the fact the number of intersection points $\mathbf x \in T_{\mbox{\boldmath{$\alpha$}}} \cap \mathbb T_{\mbox{\boldmath{$\beta$}}}$ is finite.
Let $\iota_m : \mathcal F(Y,K,m)\to \widehat{CF}(Y)$ be the inclusion map, and let $(\iota_m)_*$ be the induced map on homology.

Following \cite{HeddenTau}, we make the following definition of the rational-valued $\tau$ invariant.

\begin{defn} \label{def:RationalTau}
Using the notation of Subsection~\ref{sec:TauDef}, let $K$ be a knot in a rational homology sphere $Y$, endowed with a rational Seifert surface $F$. Given $\mathfrak s\in\spin(Y)$ and $\mathbf a\in \widehat{HF}(Y,\mathfrak s)$, define
$$\tau_{\mathbf a}(Y,K)=\min\big\{m\in\mathbb Q\:\big|\:\mathbf a\in\mathrm{im}\:(\iota_m)_*\big \},$$
and
$$\tau(Y,K,\mathfrak s)=\min\big\{m\in\mathbb Q\:\big|\:\widehat{HF}(Y,\mathfrak s)\subset\mathrm{im}\:(\iota_m)_*\big \}.$$
\end{defn}
Note that the minimum is actually attained: see~\eqref{eq:filtration}. It is straightforward to check that when $Y = S^3$, $\tau(Y,K,\mathfrak s)$ agrees with the integer-valued $\tau(K)$ (defined in \cite{Ozsvath2003}).

{\rmk \label{rmk:TauProperties} When $Y = S^3$, it is known that $\tau(K)$ gives a lower bound on the four-ball genus \cite[Corollary~1.3]{Ozsvath2003}. Raoux, in~\cite{Raoux}, has given a slightly different definition of $\tau(Y,K,\mathfrak s)$: she has studied various properties of the rational valued invariant; in particular, she proves a generalization of the genus bound result.
 }

\subsection{Heegaard Floer homology of large surgeries, and a relevant exact sequence}\label{subsect:Triangle}

We start by reviewing the ``large surgery formula" for a rationally null-homologous knot in $Y$. For a more detailed discussion, see \cite{Ozsvath2010}. Let $K\subset Y$ be an oriented knot endowed with a framing $\lambda$. Let $[K]$, as an element of $H_1(Y)$, be of order $p$. For a fixed $\xi \in \underline{\mathrm{Spin}^c}(Y, K)$, let $C_{\xi}$ be the chain complex $CFK^{\infty}(Y, K, \xi)$. There are two projection maps
\begin{equation}\label{projection}
C_{\xi}\{i\ge 0 \text{ or } j\ge 0\} \xrightarrow{v^+_{\xi}} C_{\xi}\{i\ge 0\} \text{   and   } C_{\xi}\{i\ge 0 \text{ or } j\ge 0\} \xrightarrow{h^+_{\xi}} C_{\xi}\{j\ge 0\}.
\end{equation}
Denote
\[
A^+_{\xi}(Y, K)= C_{\xi}\{i\ge 0 \text{ or } j\ge 0\} \text{ and }B^+_{\xi}(Y, K) = CF^+(Y, G_{Y, K}(\xi))
\]
and use the identifications
\[
C_{\xi}(i\ge 0)\cong CF^+(Y, G_{Y, K}(\xi)),
\]
\begin{equation}\label{eq:identification}
C_{\xi}(j\ge 0)\cong CF^+(Y, G_{Y, -K}(\xi))\cong CF^+(Y, G_{Y, K}(\xi+PD[\lambda])),
\end{equation}
where here $\lambda$ is thought as the push-off of $K$ inside $Y$ using the framing $\lambda$. Then the canonical projection maps of~\eqref{projection} may be written as
\[
v^+_{\xi}: A^+_{\xi}(Y, K) \to B^+_{\xi}(Y, K) \text{ and } h^+_{\xi}: A^+_{\xi}(Y, K) \to B^+_{\xi+PD[\lambda]}(Y, K).
\]
Since the $(\mathbb Z\oplus\mathbb Z)$--filtered chain homotopy type of $C_{\xi}$ is an invariant of the triple $(Y, K, \xi)$, the chain homotopy classes of the maps $v^+_{\xi},h^+_{\xi}$ are also invariants of the triple $(Y, K, \xi)$.

Let $W'_n(K)$ be the cobordism obtained from turning around the two-handle cobordism
from $-Y$ to $- Y_n$ (see Section~\ref{sec:Notation} for the definition of $Y_n$). It is easy to verify that
\[
H_2(W'_n(K)) \cong \mathbb Z,
\]
where the generator is the class of the capped off rational Seifert surface in $W'_n(K)$.
As in \cite[Proposition~2.2]{Ozsvath2010}, there is a well-defined map
\[
E_{Y, n , K}: \mathrm{Spin}^c(W'_n(K))\to \underline{\mathrm{Spin}^c}(Y, K),
\]
that restricts a Spin$^c$ structure on the four-manifold to the knot complement. We point out that $E_{Y,n, K}$ depends on the choice of $\lambda$, a longitude for $K$. We remind the reader that we chose a longitude for $K$ in the beginning of the subsection.
Note that
\[
H_2(W'_n(K), Y) \cong \mathbb Z
\]
is generated by $[S]$, where $S$ is the core of the two-handle attached to $Y$ in the cobordism $W'_n(K)$. We orient $S$ so that its boundary orientation is coherent with the orientation of $K$.
When $n$ is sufficiently large, the two-handle cobordism is a negative definite four-manifold, and therefore, the self-intersection number of $S$ is negative.

The following theorem relates the Heegaard Floer complex of large surgeries on $K\subset Y$ to the knot Floer complex associated to $(Y, K)$.
{\thm \cite[Theorem~4.1]{Ozsvath2010}\label{LargeSurgery} Let $K \subset Y$ be a rationally null-homologous knot in a closed, oriented
three-manifold, equipped with a framing $\lambda$. Then, for all sufficiently large $n$, there is a
map
\[
\Xi: \mathrm{Spin}^c(Y_n) \to \underline{\mathrm{Spin}^c}(Y, K)
\]
with the property that for all $\mathfrak t \in \mathrm{Spin}^c(Y_n)$, the chain complex $CF^+(Y_n,\mathfrak t)$ is
represented by the chain complex
\[
A^+_{\Xi(\mathfrak t)}= C_{\Xi (\mathfrak t)}\{i\ge 0 \text{ or }j\ge 0\}
\]
in the sense that there are isomorphisms
\[
\Psi^+_{\mathfrak t, n}:CF^+(Y_n, \mathfrak t) \to A^+_{\Xi (\mathfrak t)}(Y, K).
\]
Furthermore, fix $\mathfrak t \in \mathrm{Spin}^c(Y_n)$, and let $\Xi(\mathfrak t)= \xi$. There are Spin$^c$ structures
$\mathfrak x=\mathfrak x(\mathfrak t), \mathfrak y=\mathfrak y(\mathfrak t)\in \mathrm{Spin}^c(W'_n(K))$ with $E_{Y, n ,K }(\mathfrak x) = \xi$, and $\mathfrak y = \mathfrak x + PD[S]$ with the property that the maps $v^+_{\xi}$ and $h^+_{\xi}$ correspond to the maps induced by the cobordism $W'_n(K)$ equipped with $\mathfrak x$ and $\mathfrak y$, respectively. }
\\

Throughout the proof of Theorem~\ref{SurfaceBundle} we use a surgery exact triangle relating the Floer homologies of $Y$, $Y_0$, and $Y_n$. Before stating the sequence we make some notational conventions. Fix $\mathfrak t \in \mathrm{Spin}^c(Y_n)$. We define
\begin{equation}\label{SpinEquivalence}
[\mathfrak t]_{Y_n} = \left\{\mathfrak t' \in \mathrm{Spin}^c(Y_n) |\mathfrak t' - \mathfrak{t} \in \langle PD[\lambda]\rangle\right\},
\end{equation}
where $\langle PD[\lambda]\rangle$ denotes the cyclic group generated by $PD[\lambda]\in H^2(Y_n)$. Correspondingly, we define
\[
HF^+(Y_n, [\mathfrak t]_{Y_n}) = \bigoplus_{\mathfrak t' \in [\mathfrak t]_{Y_n}}HF^+(Y_n, \mathfrak t').
\]
For $\mathfrak s \in \mathrm{Spin}^c(Y)$, we define $[\mathfrak s]_{Y}$ and $HF^+(Y, [\mathfrak s]_{Y})$, similarly. Note that Spin$^c$ structures on $Y$ which are cobordant to a fixed Spin$^c$ structure on $Y_n$ form an affine space over the image of
\[
H^2(W'_n,Y_n) \to H^2(Y).
\]
It is straightforward to check that this image is $\langle PD[\lambda] \rangle \in H^2(Y)$. Therefore, there exists a unique orbit $[\mathfrak s_{\mathfrak t}]_{Y}$ which is cobordant to $[\mathfrak t]_{Y_n}$, for some $\mathfrak s_{\mathfrak t} \in \mathrm{Spin}^c(Y)$, in $W'_n$.
{\thm\cite[Theorem~3.3.3]{Cebanu2012}\label{ExactTriangle}  Let $Y$ be a closed, oriented three-manifold, and $K\subset Y$ be a rationally
null-homologous knot endowed with a framing $\lambda$. There is a map $$Q: \mathrm{Spin}^c(Y_{0}) \to \mathrm{Spin}^c(Y_n)/\langle PD[\lambda]\rangle$$ such that for a positive integer $n$ and $\mathfrak t \in \mathrm{Spin}^c(Y_n)$, there is a long
exact sequence
\begin{equation}\label{SurgeryTriangle}
\xymatrix{
HF^+(Y, [\mathfrak s_{\mathfrak t}]_{Y}) \ar[rr] &&HF^+(Y_{0}, Q^{-1}([\mathfrak t]_{Y_n})) \ar[ld]\\
&HF^+(Y_n, [\mathfrak t]_{Y_n}) \ar[lu]^{F}& .
}
\end{equation}
Here, $[\mathfrak s_{\mathfrak t}]_{Y}$ is the unique orbit of Spin$^c$ structures on $Y$ that is cobordant to $[\mathfrak t]_{Y_n}$ in $W'_n$.\footnote{In the statement of Theorem~\ref{ExactTriangle}, $Y_0$ denotes the Dehn surgery on $K$ with slope $\lambda$. See Subsection~\ref{sec:Notation}.}
}
\\

Theorem~\ref{ExactTriangle} is a generalization of \cite[Theorem~9.19]{Ozsvath2004a} with an almost identical proof. In \cite[Theorem~9.19]{Ozsvath2004a}, the knot is assumed to be null-homologous. The proof starts with constructing a multi Heegaard diagram $(\Sigma, \mbox{\boldmath${\alpha}$},\mbox{\boldmath$\beta$}, \mbox{\boldmath$\gamma$}, \mbox{\boldmath$\delta$},w)$ with $\Sigma$ a surface of genus $g$ where $(\Sigma, \mbox{\boldmath${\alpha}$},\mbox{\boldmath$\beta$}, w)$, $(\Sigma, \mbox{\boldmath${\alpha}$},\mbox{\boldmath$\gamma$}, w)$, and $(\Sigma, \mbox{\boldmath$\alpha$}, \mbox{\boldmath$\delta$}, w)$ describe $Y$, $Y_0$, and $Y_n$, respectively. Then appropriate maps will be defined to get the exact sequence as desired. In our case there will be $\langle [\lambda]\rangle$--orbits of Spin$^c$ structures in the statement since the knot is not null-homologous. Also, the proof of \cite[Theorem~9.19]{Ozsvath2004a} needs to be modified when we define the map $Q$. Let $X$ be the four-manifold cobordism, specified by $(\Sigma, \mbox{\boldmath${\alpha}$}, \mbox{\boldmath$\gamma$}, \mbox{\boldmath$\delta$}, w)$. For a given $\mathfrak s \in \mathrm{Spin}^c(Y_0)$, there is a unique orbit $[\mathfrak t_{\mathfrak s}]_{Y_n}$, such that there is a Spin$^c$ structure $\mathfrak s_{\alpha, \gamma, \delta} \in \mathrm{Spin}^c(X)$ with $\mathfrak s_{\alpha, \gamma, \delta}|_{Y_0}= \mathfrak s$, $\mathfrak s_{\alpha, \gamma, \delta}|_{Y_n} \in [\mathfrak t_{\mathfrak s}]_{Y_n}$. In other words, fixing $\mathfrak s\in \mathrm{Spin}^c(Y_0)$, there is a $\mathfrak t \in \mathrm{Spin}^c(Y_n)$ with the property that there is a unique $\mathfrak s_{\alpha, \gamma, \delta} \in \mathrm{Spin}^c(X)$ that extends $\mathfrak t$, some unique Spin$^c$ structure on the manifold specified by $(\Sigma, \mbox{\boldmath$\gamma$}, \mbox{\boldmath$\delta$}, w)$, and any element of the orbit $[\mathfrak t_{\mathfrak s}]_{Y_n}$. This describes the map $Q$ in the theorem.

In what follows, we will define $F$, the map relating $Y_n$ and $Y$. We will skip the definition of the other two maps in the exact sequence, and instead refer the reader to \cite[Theorem~3.3.3]{Cebanu2012} and \cite[Theorem~9.19]{Ozsvath2004a}.

Heegaard Floer homology is functorial with respect to cobordisms. Indeed, if $W$ is a smooth, connected, oriented cobordism with $\partial W = -Y_1 \cup Y_2$ which is equipped with a Spin$^c$ structure $\mathfrak s$ with restriction $\mathfrak t_i = \mathfrak s|_{Y_i}$ for $i=1,2$, then there is an induced chain map
\[
f^+_{W, \mathfrak s}: CF^+(Y_1, \mathfrak t_1) \to CF^+(Y_2, \mathfrak t_2).
\]
The construction of $f^+_{W, \mathfrak s}$ uses some auxiliary data like a Heegaard triple and an almost complex structure on $\mathrm{Sym}^g(\Sigma)$, but
the chain homotopy type of $f^+_{W, \mathfrak s}$ is an invariant of the pair $(W, \mathfrak s)$.
If $\mathfrak t_1,\mathfrak t_2$  have torsion first Chern classes, $f^+_{W, \mathfrak s}$
 is homogeneous of degree
\begin{equation}\label{degreeshift}
\displaystyle \frac{c_1(\mathfrak s)^2-2\chi(W) -3 \sigma(W)}{4},
\end{equation}
where $\chi$ and $\sigma$ denote the Euler characteristic and the signature of the four-manifold $W$, respectively.

The map $F$ in (\ref{SurgeryTriangle}) is induced by
\begin{equation}\label{eq:F3}
f=\sum_{\mathfrak s\in\mathrm{Spin}^c(W_n'(K)), \mathfrak s|_{Y_n}\in[\mathfrak t]_{Y_n}}f^+_{W_n'(K),\mathfrak s}.
\end{equation}

\subsection{The evaluation of the first Chern class}\label{sec:FirstChernClassFormula}

A key step in the proof of Theorems~\ref{SurfaceBundle},~\ref{NormMinimizing} and Proposition~\ref{prop:TightnessGeneralization} is the evaluation of the first Chern class of a Spin$^c$ structure on a second homology class. Such an evaluation is often not that straightforward to compute, however, in certain cases it is fairly well understood. Let $K$ be an oriented rationally null-homologous knot in a closed three-manifold $Y$, endowed with a framing $\lambda$ and a rational Seifert surface $F$. Let also $p$ be the order of $[K]\in H_1(Y)$. We start by stating a lemma that studies the evaluation of the first Chern class of a relative Spin$^c$ structure with either the lowest or the highest Alexander grading, evaluated on the homology class $[F, \partial F]$. Recall that $\mathcal B_{Y,K}$ is the set of all relative Spin$^c$ structures in which the knot Floer homology is not zero.
{\lemma\label{ChernClassFormula}\cite[Proposition~6.4]{YiNi2014}
Let $K$ be an oriented rationally null-homologous knot in a closed three-manifold $Y$. Let also $F$ be a minimal genus rational Seifert surface for $K$. Suppose that $K$, as an element of $H_1(Y)$, has order $p$. Then
\[
\min_{ \xi \in \mathcal B_{Y,K} }\langle c_1 (\xi), [F, \partial F] \rangle = \chi(F),
\]
\[
\max_{\xi\in \mathcal B_{Y,K} }\langle c_1 (\xi), [F, \partial F] \rangle =-\chi(F)+2p.
\]
}
\begin{proof}
This is a direct consequence of \cite[Theorem~1.1]{Ni2009}, where it is proved that
\[
\max\{\langle c_1(\xi)-PD[\mu],[F, \partial F]\rangle|\:\xi\in\mathcal B_{Y,K}\}= -\chi(F) + |F\cdot [\mu]|.
\]
This, together with Equation~\eqref{eq:Aminmax} will give us the result.
\end{proof}

The next lemma studies the evaluation of the first Chern class of a specific Spin$^c$ structure on the two-handle cobordism $W'_n(K)$, for some positive integer $n$, evaluated on the capped off Seifert surface. This will be of use in the proof of Theorem~\ref{SurfaceBundle}.

\begin{lemma}\label{lem:c1Evaluation}
Let $Y$ be a rational homology sphere, $K\subset Y$ be a knot of order $p$ in $H_1(Y)$, and $F$ be a rational Seifert surface for $K$ such that $[F,\partial F]$ represents the generator of $H_2(Y,K)$. Suppose that the null slope of $K$ is a framing. Let $\widetilde F\subset W'_n(K)$, for some positive integer $n$, be the closed surface obtained by capping off $\partial F$ with disks. Let ${\xi}\in\underline{\mathrm{Spin}^c}(Y,K)$ be a relative Spin$^c$ structure, and $\mathfrak x\in \mathrm{Spin}^c(W'_n(K))$ be a Spin$^c$ structure with $E_{Y, n , K}(\mathfrak x)={\xi}$. Then
\[
\langle c_1(\mathfrak x),[\widetilde F]\rangle=\langle c_1(\xi),[F, \partial F]\rangle-p(n+1).
\]
\end{lemma}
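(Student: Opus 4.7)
The strategy is to exhibit an explicit $2$-cycle representing $[\widetilde F]$ in $W'_n(K)$ and to decompose the Chern-number evaluation into a contribution from the knot exterior and one from the attached $2$-handle.

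Since the null slope $\alpha$ is a framing, I choose the longitude $\lambda$ used to define $Y_n$ to coincide with $\alpha$. Then $\partial F=p\lambda$ and $|[\mu]\cdot[\partial F]|=p$. In $W'_n(K)$, let $S$ be the core of the $2$-handle, a properly embedded disk with $\partial S=K$ realized at slope $\lambda$ on $\partial\nu(K)$. Since $\partial F=p\cdot\partial S$ in $H_1(\partial\nu(K))$, the $2$-chain $F-pS$ is closed, and because $H_2(W'_n(K))\cong\mathbb Z$ is generated by $[\widetilde F]$, this cycle represents $[\widetilde F]$ (up to the sign fixed by orientation).

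Next I evaluate $\langle c_1(\mathfrak x),[\widetilde F]\rangle=\langle c_1(\mathfrak x),F\rangle-p\langle c_1(\mathfrak x),S\rangle$, using compatible trivializations of the determinant line bundle $\det(\mathfrak x)$ along the common boundary $\partial F=p\,\partial S$ induced by $\xi$. Naturality together with the hypothesis $E_{Y,n,K}(\mathfrak x)=\xi$ identifies the $F$-contribution as $\langle c_1(\xi),[F,\partial F]\rangle$: the restriction of $\mathfrak x$ to the exterior $M$ recovers the Spin$^c$ structure underlying $\xi$, and the chosen boundary trivialization is exactly the one defining the relative Chern class $c_1(\xi)\in H^2(M,\partial M)$. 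For the $-pS$ piece, contractibility of the $2$-handle makes $\det(\mathfrak x)$ trivializable there, so $\langle c_1(\mathfrak x),S\rangle$ equals the relative winding on $\partial S\subset\partial\nu(K)$ between the $\xi$-trivialization and the one implicit in the $2$-handle framing. I claim this winding equals $n+1$: the $n$ records the $2$-handle framing, while the $+1$ comes from the $PD[\mu]$-shift in \eqref{SpinStrFormula} relating $\xi$ to the combinatorial data $\underline{\mathfrak s}_{w,z}$. Multiplying by $-p$ yields the claimed formula.

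The main obstacle is justifying this "$+1$" precisely, which requires careful bookkeeping of how the rel-Spin$^c$ trivialization at $\partial\nu(K)$ inherited from $\xi$ differs from the one implicit in the $2$-handle's framing convention. As a cross-check, the same decomposition gives $[\widetilde F]^2=p^2n$ (from $F\cdot S=0$ and the relative self-intersection $[S]\cdot[S]=n$), and the parity relation $\langle c_1(\mathfrak x),[\widetilde F]\rangle\equiv[\widetilde F]^2\pmod 2$ is consistent with the right-hand side via the Alexander-grading formula \eqref{AlexanderGrading}, which rewrites as $\langle c_1(\xi),[F,\partial F]\rangle=2pA(\xi)+p$. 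Equivalently, the desired identity can be recast as $\langle c_1(\mathfrak x),[\widetilde F]\rangle=2pA(\xi)-pn$, which is the rational-surgery analog of the standard Ozsv\'ath--Szab\'o formula $\langle c_1(\mathfrak x_k),[\widehat F]\rangle=2k-n$ from the null-homologous case ($p=1$), with $pA(\xi)$ replacing the integer Spin$^c$ label $k$.
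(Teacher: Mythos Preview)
Your approach---a direct geometric decomposition of $[\widetilde F]$ as $F-pS$ and a piecewise evaluation of $c_1$---is different from the paper's, and the gap is exactly where you flag it. The claim $\langle c_1(\mathfrak x),S\rangle=n+1$ is asserted, not proved. Invoking the $PD[\mu]$-shift in \eqref{SpinStrFormula} does not justify the ``$+1$'': that formula records how intersection points in a Heegaard diagram index relative Spin$^c$ structures, not how the trivialization of $\det(\mathfrak x)$ along $\partial S$ compares to the $2$-handle framing. The ``$+1$'' is a genuine normalization constant that depends on the precise boundary convention used to define $c_1(\xi)\in H^2(M,\partial M)$ (outward-pointing vector field versus tangent vector field on $\partial M$) and on how Ozsv\'ath--Szab\'o's map $E_{Y,n,K}$ is constructed; tracking this requires real work that you have not supplied. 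Your parity cross-check is consistent but cannot distinguish $n+1$ from $n-1$, and your appeal to the null-homologous formula is an analogy, not an argument. (Incidentally, since $W'_n(K)$ is negative definite one has $[S]\cdot[S]=-n$, so $[\widetilde F]^2=-p^2n$ rather than $+p^2n$; this does not affect the parity check but does affect your stated cross-check.)

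The paper sidesteps this trivialization bookkeeping entirely. It first shows that the difference $C=\langle c_1(\mathfrak x),[\widetilde F]\rangle-\langle c_1(\xi),[F,\partial F]\rangle$ is independent of $\xi$, since $E_{Y,n,K}$ is an affine bijection modeled on an isomorphism $H^2(W'_n(K))\to H^2(Y,K)$. It then pins down $C$ by a symmetry argument: pick $\xi_0$ with $\langle c_1(\xi_0),[F,\partial F]\rangle=p$, set $\eta_0=J(\xi_0)+PD[\mu]$ (which has the same evaluation), and use that $h_{\eta_0}$ is chain-homotopic to $v_{\xi_0}$ together with the degree-shift formula \eqref{degreeshift} to deduce $c_1^2(\mathfrak x_0)=c_1^2(\mathfrak x_0+PD[S])$. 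This forces $\langle c_1(\mathfrak x_0),[\widetilde F]\rangle=-np$, whence $C=-p(n+1)$. The trade-off: your approach would be more elementary if the ``$+1$'' could be nailed down geometrically, while the paper's approach leverages the already-established conjugation symmetry of knot Floer homology and avoids any direct relative-Chern-class computation.
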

\begin{proof}
Let $H\subset W'_n(K)$ be the two-handle attached to $Y\times[0,1]$. The natural map
\[
\iota^*: H^2(W'_n(K),H) \to H^2(W'_n(K)),
\]
induced by inclusion, is an isomorphism.
Since $H^2(W'_n(K),H)\cong H^2(Y,K)$, the inverse of $\iota^*$ gives a map
\[
\varepsilon: H^2(W'_n(K))\to  H^2(Y,K).
\]
The affine map $E=E_{Y, n , K}: \mathrm{Spin}^c(W'_n(K))\to \underline{\mathrm{Spin}^c}(Y,K)$ is modeled on $\varepsilon$. That is, $E$ covers $\varepsilon$ as a torsor map.
Fix any $\xi_0\in \underline{\mathrm{Spin}^c}(Y,K)$, let $\mathfrak x_0=E^{-1}(\xi_0)$, and let
\begin{equation}\label{eq:DiffC}
C=\langle c_1(\mathfrak x_0),[\widetilde F]\rangle-\langle c_1(\xi_0),[F, \partial F]\rangle.
\end{equation}
Assume $a=\mathfrak x-\mathfrak x_0\in H^2(W'_n(K))$, then $\varepsilon(a)=\xi-\xi_0$, and one has
\[
\langle a,[\widetilde F]\rangle=\langle \varepsilon(a),[F,\partial F]\rangle.
\]
It follows that
\begin{eqnarray}
\langle c_1(\mathfrak x),[\widetilde F]\rangle&=&\langle c_1(\mathfrak x_0)+2a,[\widetilde F]\rangle\nonumber\\
&=&\langle c_1(\xi_0)+2\varepsilon(a),[F, \partial F]\rangle+C\nonumber\\
&=&\langle c_1(\xi),[F, \partial F]\rangle+C.\label{eq:+C}
\end{eqnarray}
Thus the constant $C$ does not depend on the choice of $\xi_0$.

Let $\xi_0$ be a relative Spin$^c$ structure satisfying $\langle c_1(\xi_0),[F,\partial F]\rangle=p$. Let $\eta_0=J(\xi_0)+PD[\mu]$, where $J$ is the conjugation on $\underline{\mathrm{Spin}^c}(Y,K)$. Then, using~\eqref{conjugate},
\[
\langle c_1(\eta_0),[F, \partial F]\rangle=\langle -c_1(\xi_0)+2PD[\mu],[F,\partial F]\rangle=-p+2p=p=\langle c_1(\xi_0),[F,\partial F]\rangle.
\]
Hence, it follows from~\eqref{eq:+C} that
\[
\langle c_1(E^{-1}(\eta_0)),[\widetilde F]\rangle=\langle c_1(\mathfrak x_0),[\widetilde F]\rangle.
\]
Since $H^2(W'_n(K);\mathbb Q)\cong\mathbb Q$, the square of any $a\in H^2(W'_n(K))$ determines and is determined by $|\langle a,[\widetilde F]\rangle|$. So, using (\ref{degreeshift}), we conclude that the degree of $h_{\eta_0}$ is equal to the degree of $h_{\xi_0}$.

On the other hand, it is well known that $h_{\eta_0}$ is chain homotopy equivalent to $v_{\xi_0}$ after interchanging the roles of $i,j$. (See the proof of \cite[Proposition~8.2]{OSzLink}.) So the degree of $v_{\xi_0}$ is equal to the degree of $h_{\eta_0}$. It follows that the degrees of $v_{\xi_0}$ and $h_{\xi_0}$ are equal. By Theorem~\ref{LargeSurgery}, $v_{\xi_0}$ is induced by $\mathfrak x_0$, and $h_{\xi_0}$ is induced by $\mathfrak x_0 + PD[S]$. By (\ref{degreeshift}), we have that $c_1^2(\mathfrak x_0)=c_1^2(\mathfrak x_0 + PD[S])$. We claim that
\begin{equation}\label{eq:c1Neg}
\langle c_1(\mathfrak x_0),[\widetilde F]\rangle=-\langle c_1(\mathfrak x_0+ PD[S]),[\widetilde F]\rangle.
\end{equation}
Note that the quality of the $c_1^2$ will give us that
\[
\langle c_1(\mathfrak x_0),[\widetilde F]\rangle=\pm\langle c_1(\mathfrak x_0+ PD[S]),[\widetilde F]\rangle.
\]
The plus sign would imply that $PD[S]=0$, a contradiction.
The orientations of both $\partial F$ and $\partial S$ are coherent with respect to the orientation of $K$. Therefore, using that the null slope of $K$ is a framing, $\widetilde F$ is obtained from $F$ by gluing $p$ copies of $-S$ along the boundary components. Since the framing of the two-handle in $W'_n$ is $-n$, and also that $S$ is the core of the two handle attached to $Y$ in $W'_n$, we have
\begin{equation}\label{eq:PD[S]}
\langle PD[S], [\widetilde F]\rangle=[S]\cdot[\widetilde F]=p[S]\cdot[-S]=np.
\end{equation}
Thus,  \eqref{eq:c1Neg} implies that $\langle c_1(\mathfrak x_0),[\widetilde F]\rangle=-np$. Now, using (\ref{eq:DiffC}), we get that $C=-(n+1)p$. This, together with~\eqref{eq:+C} will give us the result.
\end{proof}

\begin{cor}\label{cor:DiffTors}
Using the assumptions of Lemma~\ref{lem:c1Evaluation}, let $\xi_i\in\underline{\mathrm{Spin}^c}(Y,K)$ be a relative Spin$^c$ structures satisfying
\[
\langle c_1(\xi_1),[F, \partial F]
\rangle=\langle c_1(\xi_2),[F,\partial F]
\rangle ,
\]
 and $\mathfrak x_i\in \text{Spin}^c(W'_n(K))$ be Spin$^c$ structures with $E_{Y, n , K}(\mathfrak x_i)=\xi_i$, $i=1,2$. Then $\mathfrak x_1-\mathfrak x_2$ is a torsion element in $H^2(W'_n(K))$.
\end{cor}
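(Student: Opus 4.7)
The plan is to reduce the claim to a rank-one calculation inside $H^2(W'_n(K);\mathbb{Q})$. Applying Lemma~\ref{lem:c1Evaluation} to each $\xi_i$ and the chosen lift $\mathfrak{x}_i$ gives
\[
\langle c_1(\mathfrak{x}_i),[\widetilde{F}]\rangle=\langle c_1(\xi_i),[F,\partial F]\rangle-p(n+1),\qquad i=1,2.
\]
The hypothesis $\langle c_1(\xi_1),[F,\partial F]\rangle=\langle c_1(\xi_2),[F,\partial F]\rangle$ therefore forces $\langle c_1(\mathfrak{x}_1),[\widetilde{F}]\rangle=\langle c_1(\mathfrak{x}_2),[\widetilde{F}]\rangle$.

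Next, I would write $a=\mathfrak{x}_1-\mathfrak{x}_2\in H^2(W'_n(K))$, and use the standard relation $c_1(\mathfrak{x}_1)-c_1(\mathfrak{x}_2)=2a$ to rephrase the previous identity as
\[
2\langle a,[\widetilde{F}]\rangle=0,
\]
i.e., $\langle a,[\widetilde{F}]\rangle=0$ in $\mathbb{Z}$ and hence in $\mathbb{Q}$.

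To conclude that $a$ is torsion, I would invoke the observation already used in Lemma~\ref{lem:c1Evaluation} that $H^2(W'_n(K);\mathbb{Q})\cong\mathbb{Q}$, together with the fact that $[\widetilde{F}]$ generates $H_2(W'_n(K);\mathbb{Q})$. The latter follows from the computation $\langle PD[S],[\widetilde{F}]\rangle=np\neq 0$ displayed in \eqref{eq:PD[S]}, which shows $[\widetilde{F}]$ is non-torsion in $H_2(W'_n(K))$ and therefore spans $H_2(W'_n(K);\mathbb{Q})$. The non-degenerate Kronecker pairing between $H^2(W'_n(K);\mathbb{Q})$ and $H_2(W'_n(K);\mathbb{Q})$ then implies that the image of $a$ in $H^2(W'_n(K);\mathbb{Q})$ vanishes, so $a$ is torsion in $H^2(W'_n(K))$.

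The argument is essentially routine given Lemma~\ref{lem:c1Evaluation}; the only mild point requiring care is the verification that $[\widetilde{F}]$ generates $H_2(W'_n(K);\mathbb{Q})$, which is already implicit in the preceding lemma. No new ingredients beyond the rank-one structure of $H^2(W'_n(K);\mathbb{Q})$ and the $c_1$-evaluation formula are needed.
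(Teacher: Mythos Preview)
Your proof is correct and follows essentially the same route as the paper: apply Lemma~\ref{lem:c1Evaluation} to obtain $\langle c_1(\mathfrak x_1)-c_1(\mathfrak x_2),[\widetilde F]\rangle=0$, rewrite this as $2\langle \mathfrak x_1-\mathfrak x_2,[\widetilde F]\rangle=0$, and conclude torsion from $H^2(W'_n(K);\mathbb Q)\cong\mathbb Q$. The only difference is that you spell out why $[\widetilde F]$ spans $H_2(W'_n(K);\mathbb Q)$, which the paper leaves implicit.
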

\begin{proof}
By Lemma~\ref{lem:c1Evaluation},
\[
2\langle\mathfrak x_1-\mathfrak x_2,[\widetilde F]\rangle=\langle c_1(\mathfrak x_1)-c_1(\mathfrak x_2),[\widetilde F]\rangle=0.
\]
Since $H^2(W'_n(K);\mathbb Q)=\mathbb Q$,  it follows that $\mathfrak x_1-\mathfrak x_2$ is torsion.
\end{proof}

\subsection{Heegaard Floer contact invariant associated to fibered knots}\label{sec:Contact}

This subsection is devoted to defining the Heegaard Floer contact invariant associated to a fibered knot $K$ in a closed three-manifold $Y$. We choose $\mathbb F = \mathbb Z/2$ as the coefficient ring for the Heegaard Floer homology, to avoid any sign ambiguities. We do not review many of the concepts and definitions but instead refer the reader to~\cite{etnyre2006} for a review of
contact geometry, and to~\cite{Ozsvath2005} for the Heegaard Floer contact invariant in the case of fibered null-homologous knots. See~\cite{Hedden2011} for more details. A fibered knot $K\subset Y$ induces a rational open book decomposition and, therefore, a contact structure $\xi_K$~\cite{Baker2012}. Hedden and Plamenevskaya, in~\cite{Hedden2011}, studied the contact structure $\xi_K$ in terms of the knot Floer homology
of $K$. (See also~\cite{Ozsvath2005}.) More precisely, the ``bottommost" filtered subcomplex in the filtration of $\widehat{CF}(-Y)$ induced by $K$ has homology $\mathbb F$, that is, $H_*\mathcal (\mathcal F(\text{bottom}))\cong \mathbb F .\langle c \rangle $. If $\iota :  \mathcal F(\text{bottom}) \to \widehat{CF}(-Y)$ is the inclusion map of the lowest non-trivial subcomplex, the contact invariant associated to $K$ is defined to be $\iota_*(c)=c(\xi_K)\in \widehat{HF}(-Y)$. Here, $\widehat{HF}(-Y)$ is the Heegaard Floer homology of the manifold $Y$ with its orientation reversed and $\iota_*$ is the induced map in homology. (See \cite[Theorem~1.1]{Hedden2011}.) In words, the contact invariant of $\xi_K$ is the image of the generator of the homology of the bottom filtered subcomplex in the Floer homology of $Y$, under the natural inclusion induced map. When $K$ is fibered (instead of being only rationally fibered), the contact invariant just defined is simply Ozsv\'{a}th and Szab\'{o}'s definition~\cite[Definition~1.2]{Ozsvath2005}.


\section{Fibered, Floer simple knots induce tight contact structures}\label{TightnessGeneralization}

In this section we give a proof of Proposition~\ref{prop:TightnessGeneralization}.
Recall that the function $y(h)$ in the statement of Proposition~\ref{prop:TightnessGeneralization} is defined as
\[
y(h) = \max_{\left \{ \xi \in \underline{\textup{Spin}^c}(Y, K) : \widehat{HFK}(Y, K, \xi) \ncong 0 \right \}}\langle \mathfrak H (\xi), h \rangle,
\]
where $h \in H_2(Y, K; \mathbb Q)$, $\mathfrak H$ is the affine function $\mathfrak H : \underline{\textup{Spin}^c}(Y, K) \to H^2(Y, K; \mathbb Q)$ defined as
\[
\displaystyle \mathfrak H (\xi) = \frac{c_1(\xi) - PD[\mu]}{2}.
\]
Note that $y$ is a rational-valued function.

\begin{proof}[Proof of Proposition~\ref{prop:TightnessGeneralization}]
In order to show that the contact structure $\xi_K$ is tight, we will show that the Heegaard Floer contact invariant $c(\xi_K)$ is non-zero~\cite{Hedden2011}.
By \cite[Lemma~4.5]{Rasmussen2003} and its proof, there exists a unique filtered chain complex $C'$ such that $C'$ is filtered chain homotopy equivalent to $\widehat{CFK}(-Y,K)$, and $C'\cong \widehat{HFK}(-Y, K)$ as an abelian group. Here we use $\mathbb F=\mathbb Z/2\mathbb Z$ coefficients for the Heegaard Floer homology groups. Since $K$ is Floer simple, the differential on $C'$ is zero. Consequently, the inclusion map $\iota :  \mathcal F(\text{bottom}) \to \widehat{CF}(-Y)$ induces on the homology level the inclusion map of a nontrivial subgroup of $C'$ to $C'$.
Thus the contact invariant is non-zero. In particular, $\xi_{K}$ is tight.

Without loss of generality we may assume that the rational Seifert surface $F$ is of minimal genus. Since $K$ is Floer simple, there exists $\mathfrak s$ such that
\begin{eqnarray*}
\tau(Y, K,\mathfrak s)&=&A_{\text{topmost}}\\
&=&\frac{\langle c_1(\overline{\xi}),[F, \partial F]\rangle- |[\mu] \cdot [\partial F]|}{2|[\mu] \cdot [\partial F]|}\\
&=&\frac{-\chi(F) + |[\mu] \cdot [\partial F]|}{2|[\mu] \cdot [\partial F]|}\\
&=&\frac{y([F])}{|[\mu] \cdot [\partial F]|},
\end{eqnarray*}
where the last two equalities follow from Lemma~\ref{ChernClassFormula} and \cite[Theorem~1.1]{Ni2009}, respectively.
This completes the proof.
\end{proof}
We point out that to prove the second statement of Proposition~\ref{prop:TightnessGeneralization}, we do not use the fact that the rational Seifert surface is of minimal genus.

{\rmk \label{rem:Lens} Hedden in \cite[Proposition~2.1]{Hedden2010} shows that for a fibered knot $K$ in $S^3$, statements (1) and (2) of Theorem~\ref{thm:S1S2EquivState} are equivalent, and both are equivalent to having $\xi_K$ being tight. One key tool used in his proof is that there is only one unique tight contact structure $\xi_{\textup{std}}$ on $S^3$. Moreover, $\xi_{\textup{std}}$ is detected by the contact invariant, that is $c(\xi_{\textup{std}}) \ne 0$. This follows from the fact that the invariant associated to $(S^3, \xi_{\textup{std}})$ is equal to the generator of $\widehat{HF}(S^3) \cong \mathbb Z$. For L-spaces, there could be multiple tight contact structures, and some of them might not be detected by the contact invariant. However, for lens spaces it is known that all the tight contact structures are distinguished by the Heegaard Floer contact invariant. See~\cite[p.3]{Lisca2007} and \cite{Lisca1997}. In summary, for a fibered, Floer simple knot $K$ in a lens space $Y$, endowed with a minimal genus Seifert surface $F$, the following equivalent statements hold: (1) $\xi_K$ is tight, (2) $c(\xi_K)\not = 0$, and (3) there exists $\mathfrak s$ such that $\tau(Y, K,\mathfrak s)=\frac{y([F])}{|[\mu] \cdot [\partial F]|}$. This generalizes~\cite[Items (2), (3), and (4) of Proposition~2.1]{Hedden2010}.}

{\rmk \label{S1S2Overtwisted}In \cite[Lemma~1.18]{Baker2013}, it is proved that for a knot $L \subset S^1 \times S^2$, the exterior fibers over $S^1$ if and only if $L$ is isotopic to a spherical braid. Therefore, Theorem~\ref{thm:S1S2fiber} states that if a knot $L \subset S^1 \times S^2$ admits an L-space surgery, then its exterior fibers over $S^1$. Equivalently, there is a fibration of $S^1 \times S^2\setminus \nu^{\circ}(L)$ where the boundary of the fibers consists of the meridians of $L$. It can be proved that the contact structure compatible with the fibration is overtwisted unless the braid index of $L$, when $L$ is viewed as a spherical braid in $S^1 \times S^2$, is one. In the lack of an application of this result related to the purpose of the paper, we will not present a proof here, however, it will be interesting to investigate whether or not such contact structures can be classified (e.g. via the Hopf invariant~\cite{Eliashberg1989}).}


\section{Knots in $S^1 \times S^2$ with L-space surgeries}\label{sec:S1S2}

This section is devoted to the proof of Theorem~\ref{thm:S1S2fiber} and Proposition~\ref{prop:FloerSimple}. Let $L$ and $U$ be knots in $S^3$ such that $U$ is the unknot, and the linking number between $L$ and $U$ is $p>0$. Suppose that some surgery on the link $L\cup U$ results in an L-space $Y$, where the surgery slope on $U$ is zero. Let $K\subset Y$ be the dual knot of $L$ (i.e. $K$ is the core of the solid torus attached to $S^3_0(U)\setminus \nu^{\circ}(L)$). Let $\mu$ be the meridian of $K$. Let also $\left\{\mu_L, \lambda_L\right\}$ and $\left\{\mu_U, \lambda_U\right\}$ be the the meridian-longitude coordinates of $L$ and $U$ in $S^3$, respectively. Set $\alpha=\mu_L$.

If $M$ is a rational homology $S^1 \times D^2$ (e.g. the complement of a knot in a rational homology sphere), we say that $M$ is {\it semi-primitive} if the torsion subgroup of $H_1(M)$ is contained in the image of $\iota : H_1(\partial M) \to H_1(M)$, where $\iota$ is the map induced by inclusion.

\begin{proof}[Proof of Theorem~\ref{thm:S1S2fiber}]
We first show that $M=Y\setminus\nu^{\circ}(K)$ is semi-primitive.
It is well known that $H_1(S^3\setminus(L\cup U))$ is freely generated by $\mu_L,\mu_U$. Moreover, we have the equalities:
$$\lambda_L=p\mu_U,\quad \lambda_U=p\mu_L.$$
Since $M$ is obtained from $S^3\setminus \nu^{\circ}(L\cup U)$ by Dehn filling on $\partial\nu(U)$ with slope $\lambda_U$, we have
$$H_1(M)=\langle \mu_L,\mu_U|\:p\mu_L=0\rangle.$$
Hence the torsion subgroup of $H_1(M)$ is generated by $\mu_L$, which is contained in the image of $\iota$. This shows that $M$ is semi-primitive.

Since $L\subset S^3_0(U)$ admits an L-space surgery, \cite[Proposition 7.8]{Rasmussen2015} implies that $M$ is a generalized solid torus in the sense of~\cite[Definition~7.2]{Rasmussen2015}.
Now it follows from \cite[Corollary~7.12]{Rasmussen2015} that $Y$ fibers over $S^1$.
\end{proof}

We now turn to proving that $K$ is a Floer simple knot in $Y$. We recall from Section~\ref{intro} that $K\subset Y$ is called Floer simple if
\[
\rank \widehat{HFK}(Y, K) = \rank \widehat{HF}(Y).
\]
Let $\mathrm{Tor}_M \subset H_1(M)$ be the torsion subgroup. As in \cite{Rasmussen2015} we take the map
\[
\phi : H_1(M) \to \mathbb{Z},
\]
where $\phi$ is the projection from $H_1(M)$ to $\displaystyle H_1(M)/ \mathrm{Tor}_M \cong \mathbb{Z}$, and the isomorphism is chosen so that $\phi(\mu_U) > 0$. Combining \cite[Lemma~3.2 and Corollary~3.4]{Rasmussen2015} we get that:
\begin{prop}\label{prop:RasmussenFloerSimple} Let $K$ be a knot in an L-space $Y$ that admits a non-trivial L-space surgery. If $\phi(\mu) > ||M||$, where $||M||$ is the Thurston norm of a generator of $H_2(M, \partial M)$, then $K$ is Floer simple in $Y$.
\end{prop}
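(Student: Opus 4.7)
The plan is to combine Ni's Thurston norm bound on the support of knot Floer homology (the same result invoked in Lemma~\ref{ChernClassFormula}) with the Ozsv\'ath--Szab\'o surgery formula applied to the given L-space filling of $M$. Since $Y$ is itself an L-space, $\widehat{HF}(Y,\mathfrak s)\cong\mathbb F$ for every $\mathfrak s\in\mathrm{Spin}^c(Y)$, so Floer simplicity amounts to proving
\[
\sum_{\xi\in G_{Y,K}^{-1}(\mathfrak s)}\rank\widehat{HFK}(Y,K,\xi)=1 \qquad \text{for each }\mathfrak s.
\]
The first step is Ni's Thurston norm detection theorem, which confines the nonzero Alexander gradings of $\widehat{HFK}(Y,K)$ to an interval of width bounded by $\|M\|$; this provides an a priori width for the knot Floer complex along the meridional direction in $\underline{\mathrm{Spin}^c}(Y,K)$.

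Next, I would apply the rational surgery mapping cone formula (or the large surgery formula, Theorem~\ref{LargeSurgery}, when the null slope is a framing) to compute $\widehat{HF}$ of the given L-space filling $Y_\beta$ of $M$ in terms of the knot Floer complex of $(Y,K)$. The hypothesis $\phi(\mu)>\|M\|$ means the meridional direction in $H_1(M)/\mathrm{Tor}$ dominates the Thurston norm, which is precisely the regime in which $\widehat{HF}(Y_\beta,\mathfrak t)$ reads off a single Alexander-graded summand of $\widehat{HFK}(Y,K)$ for each $\mathrm{Spin}^c$ structure $\mathfrak t$ on $Y_\beta$. The L-space property of $Y_\beta$, which forces $\widehat{HF}(Y_\beta,\mathfrak t)\cong\mathbb F$, then pins down each such summand to rank one. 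Summing over $\mathrm{Spin}^c$ structures on $Y_\beta$ and translating back via the fiberwise description yields $\rank\widehat{HFK}(Y,K)=|H_1(Y)|=\rank\widehat{HF}(Y)$, and hence Floer simplicity.

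The main obstacle is the Spin$^c$ bookkeeping in the rationally null-homologous setting. The fibers of the projection $G_{Y,K}:\underline{\mathrm{Spin}^c}(Y,K)\to\mathrm{Spin}^c(Y)$ are cosets of $\ker(H^2(Y,K)\to H^2(Y))$, and the Alexander grading shifts by one under each $PD[\mu]$-shift. Ensuring that the Thurston norm bound, transported through this orbit structure, really leaves exactly one nontrivial $\xi$ per fiber requires comparing $\|M\|$ with the orbit spacing, and this is where $\phi(\mu)>\|M\|$ enters crucially. In Rasmussen's original argument, this combinatorics is carried out in the bordered Heegaard Floer framework, where $M$ is encoded as a type $D$ structure and the L-space filling constraint becomes a combinatorial condition on this structure, from which Floer simplicity then falls out.
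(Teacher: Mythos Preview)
The paper does not supply its own proof of this proposition: it is introduced by the sentence ``Combining \cite[Lemma~3.2 and Corollary~3.4]{Rasmussen2015} we get that,'' and is simply a restatement of those cited results. So there is no in-paper argument to compare your proposal against; you are in effect reconstructing the Rasmussens' proof.

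Your outline is in the right spirit and close to what the Rasmussens actually do: the Thurston-norm bound on the support of $\widehat{HFK}$, combined with the surgery formula applied to the two L-space fillings of $M$, is indeed the engine. Two remarks, though. First, the step where you assert that $\widehat{HF}(Y_\beta,\mathfrak t)$ ``reads off a single Alexander-graded summand of $\widehat{HFK}(Y,K)$'' is the crux and is glossed over: the large-surgery formula identifies $\widehat{HF}$ of the filling with $H_*(\widehat A_\xi)$, a hat-flavored subquotient of $CFK^\infty$, not directly with a single $\widehat{HFK}$ group. Showing that $\widehat A_\xi$ collapses to one $\widehat{HFK}$ summand under the width hypothesis, and that every nonzero summand is hit exactly once as $\mathfrak t$ varies, is the actual content, and it requires keeping track of which slope the largeness condition $\phi(\mu)>\|M\|$ refers to (it is a condition on the meridian $\mu$ of $K$ in $Y$, so it is $Y$, not $Y_\beta$, that sits in the large-surgery regime). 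Second, your closing attribution to ``the bordered Heegaard Floer framework'' is not accurate: \cite{Rasmussen2015} works with the Ozsv\'ath--Szab\'o mapping cone formula and Turaev torsion, not with bordered invariants; the bordered/immersed-curve reinterpretation of L-space intervals came later, in work of Hanselman--Rasmussen--Watson.
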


\begin{proof}[Proof of Proposition~\ref{prop:FloerSimple}]Using \cite[Proposition 7.8]{Rasmussen2015}, we conclude that any rational homology sphere obtained by surgery on $K$ is an L-space.
Hence in order to apply Proposition~\ref{prop:RasmussenFloerSimple} to the knot $K$, we only need to check that $\phi (\mu) > ||M||$. Let $F$ be a minimal genus rational Seifert surface for $K$ in $Y$. By Theorem~\ref{thm:S1S2fiber}, $F$ is a fiber of a fibration of $Y\setminus \nu^{\circ}(K)$ over $S^1$. Since the $\alpha$--surgery on $K$ yields $S^1\times S^2$, $F$ must be a punctured two-sphere. (We remind the reader that $\alpha = \mu_L$.) The number of punctures must be $p$, since $L$ and $U$ link each other $p$ times.
Therefore, $\chi(F) = 2-p$. Consequently, $||M|| = -\chi(F) = p-2$. It is just left to compute $\phi(\mu)$. Since $\mu\ne\mu_L$, we must have $\mu=n\mu_L+m\lambda_L$ for some $m\ne0$. As showed in the proof of Theorem~\ref{thm:S1S2fiber}, $\lambda_L = p\mu_U$ and $H_1(M)=\langle \mu_L,\mu_U|\:p\mu_L=0\rangle$, hence \[\phi(\mu) = \phi(n\mu_L+m\lambda_L)=\phi(m\lambda_L)=|m|p.\] Since $m\ne 0$, we get that $\phi(\mu) > ||M||$.
\end{proof}


\section{Knots in L-spaces with null surgery to fibered manifolds}\label{Thurston-norm}
The focus of this section is on proving Theorems~\ref{SurfaceBundle} and \ref{NormMinimizing}.
Let $K$ be a knot of order $p$ in a rational homology sphere $Y$, endowed with a framing $\lambda$.
As usual, let $F$ be a minimal genus rational Seifert surface for $K$. Note that $p$ is the intersection number of $\partial F$ with $\mu$. We define
\[g(K)=-\frac{\chi(F)}{2p}+\frac12\]
to be the {\it normalized genus} of $K$. When $K$ is null-homologous, that is when $p=1$, this descends to the standard definition of the three-genus of a null-homologous knot in $Y$.
The following fact about the connected sum of knots is elementary.

\begin{lemma}\label{lem:ConnSum}
Let $K_i$ be a knot in a rational homology sphere $Z_i$, $i=1,2$. Then
\begin{equation}\label{eq:GenusSum}
g(K_1\#K_2)=g(K_1)+g(K_2).
\end{equation}
Moreover, $K_1\#K_2$ is fibered if and only if both $K_1$ and $K_2$ are fibered.
\end{lemma}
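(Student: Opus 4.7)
The plan is to prove both inequalities $g(K_1\#K_2)\le g(K_1)+g(K_2)$ and $g(K_1\#K_2)\ge g(K_1)+g(K_2)$ by a cut-and-paste argument along the connected sum sphere, and then to deduce the fiberedness statement via the Heegaard Floer characterization of fiberedness. First I would determine the order of $[K_1\#K_2]$: under the natural decomposition $H_1(Z_1\#Z_2)=H_1(Z_1)\oplus H_1(Z_2)$ the class $[K_1\#K_2]$ equals $([K_1],[K_2])$, so its order is $p=\mathrm{lcm}(p_1,p_2)$. Write $a_i=p/p_i$.

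For the upper bound, I would construct an explicit rational Seifert surface $F$ for $K_1\#K_2$. Let $F_i$ be a minimal genus rational Seifert surface for $K_i$. After isotopy, the connected sum ball $B_i\subset Z_i$ may be chosen so that $F_i\cap B_i$ consists of $p_i$ standard half-disks, each meeting $\partial B_i$ in an arc inside the annulus $\partial B_i\setminus\nu(K_i)$. Taking $a_i$ parallel copies of $F_i$ produces $p$ parallel arcs on each $\partial B_i$; matching the arcs under the connected sum identification glues the $a_iF_i\cap(Z_i\setminus B_i)$ into a rational Seifert surface $F$ for $K_1\#K_2$ with $p$ longitudinal boundary components. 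An inclusion--exclusion Euler-characteristic count gives $\chi(F)=a_1\chi(F_1)+a_2\chi(F_2)-p$, and dividing by $2p$ and using $p=a_ip_i$ yields the bound.

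For the reverse inequality, I would take a minimal genus rational Seifert surface $F$ for $K_1\#K_2$ and cut along the decomposing annulus $A\subset M=(Z_1\#Z_2)\setminus\nu^\circ(K_1\#K_2)$ that the connected sum sphere restricts to in the exterior. Here $\partial A$ is two meridians of $K_1\#K_2$, and the two pieces of $M\setminus A$ are homeomorphic to $M_i=Z_i\setminus\nu^\circ(K_i)$ with $A$ attached along a meridional annulus on $\partial M_i$. Each meridian of $\partial A$ meets $\partial F$ in $p$ points, so $F\cap A$ contains at least $p$ arcs; after an isotopy minimizing $|F\cap A|$---where innermost-disk moves eliminate inessential circles, and boundary compressions (using essentiality of $A$, which holds unless some $K_i$ is the unknot in $S^3$, a case in which~\eqref{eq:GenusSum} is trivially true) combined with the tautness of a norm-minimizing $F$ eliminate essential circles---one may assume $F\cap A$ is exactly these $p$ arcs. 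Capping the resulting pieces off by half-disks on the annular part of $\partial M_i$ yields rational Seifert surfaces $\widetilde F_i\subset M_i$ for $K_i$ with $p=a_ip_i$ longitudes as boundary and $\chi(\widetilde F_i)$ equal to the Euler characteristic of the corresponding piece of $F$. Linearity of the Thurston norm on rays in $H_2(M_i,\partial M_i)$ gives $-\chi(\widetilde F_i)\ge -a_i\chi(F_i)$, and summing produces $\chi(F)=\chi(\widetilde F_1)+\chi(\widetilde F_2)-p\le a_1\chi(F_1)+a_2\chi(F_2)-p$, completing the equality~\eqref{eq:GenusSum}.

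For the fibered statement, I would use the Heegaard Floer characterization: a rationally null-homologous knot in a closed $3$-manifold is fibered if and only if $\widehat{HFK}$ has rank one in the topmost Alexander grading. The K\"unneth formula $\widehat{HFK}(Z_1\#Z_2,K_1\#K_2)\cong\widehat{HFK}(Z_1,K_1)\otimes\widehat{HFK}(Z_2,K_2)$, combined with the additivity of the topmost Alexander grading under connected sum, shows that the rank at the top of $K_1\#K_2$ is the product of the ranks at the tops of $K_1$ and $K_2$. Since a product of positive integers equals one iff both factors do, this yields both directions of the fibered equivalence simultaneously. The step I expect to be hardest is the reduction of $F\cap A$ to exactly $p$ arcs in the lower-bound argument: inessential circles are easily removed by innermost-disk moves, but essential circles in $A$ are not, and eliminating them requires the tautness of $F$ plus a careful boundary-compression argument (a standard but delicate sutured-manifold consideration).
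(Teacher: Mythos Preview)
Your argument for the genus additivity~\eqref{eq:GenusSum} is essentially the same cut-and-paste along the decomposing annulus $A$ that the paper gives. The only cosmetic differences are that you glue $a_i=p/p_i$ parallel copies of $F_i$ (producing $p$ arcs on $A$) while the paper glues $p_{3-i}$ copies (producing $p_1p_2$ arcs); both yield the same upper bound after dividing through. You are also more explicit than the paper about why $F\cap A$ can be reduced to exactly $p$ essential arcs, separating the innermost-disk step from the essential-circle step; the paper compresses only inessential circles and is terser on this point.

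The fibered statement is where the two proofs genuinely diverge. The paper argues topologically: if each $K_i$ is fibered with fibration $\phi_i\colon M_{Z_i}\to S^1$, then $\phi_1^{a_1}$ and $\phi_2^{a_2}$ can be made to agree on $A$ and glue to a fibration of $M_Z$; conversely a fibration of $M_Z$ with fiber transverse to $A$ restricts to fibrations of each $M_{Z_i}$. You instead invoke the K\"unneth formula for $\widehat{HFK}$ under connected sum together with the rank-one fiberedness criterion, using the additivity of the top Alexander grading (which follows from the genus equality just established). Both routes are correct. The paper's approach is elementary and self-contained; yours is shorter once the machinery is in hand, and is not circular since the K\"unneth formula and the fiberedness detection theorem are independent inputs. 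In the context of the paper, where this lemma is a preliminary fact used to reduce the main Heegaard Floer arguments to the Morse-surgery case, the direct topological proof is more economical.
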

\begin{proof}
Let $Z=Z_1\#Z_2$, and $S\subset Z$ be a sphere which splits $Z$ into a punctured $Z_1$ and a punctured $Z_2$. We may assume $S$ intersects  $K_1\#K_2$ exactly twice. Set $M_Z=Z\setminus\nu^\circ (K_1\#K_2)$.
The annulus $A=S\cap M_Z$ splits $M_Z$ into two submanifolds $M_{Z_1}$ and $M_{Z_2}$, such that $M_{Z_i}$ is homeomorphic to $Z_i\setminus\nu^\circ (K_i)$.

Let $p_i$ be the order of $[K_i]$ in $H_1(Y_i)$. The order of $[K_1\#K_2]$ will be $p=\mathrm{lcm}(p_1,p_2)$, the least common multiple of $p_1$ and $p_2$. Take $F_i\subset M_{Z_i}\cong Z_i\setminus\nu^\circ (K_i)$ to be a minimal genus rational Seifert surface for $K_i$. We can isotope $F_i$ so that $F_i\cap A$ consists of $p_i$ essential arcs in $A$. We may assume that $$(p_2F_1)\cap A=(p_1F_2)\cap A,$$ since each of $p_1 F_2$ and $p_2 F_1$ consists of $p_1p_2$ essential arcs in $A$. Here $p_2F_1$ denotes the union of $p_2$ parallel copies of $F_1$, and similarly for $p_1 F_2$. Thus $F=(p_2F_1)\cup(p_1 F_2)$ is a rational Seifert surface for $K_1\#K_2$. We get
\begin{eqnarray*}
g(K_1\#K_2)&\le&-\frac{\chi(F)}{2p_1p_2}+\frac12\\
&=&-\frac{p_2\chi(F_1)+p_1\chi(F_2)-p_1p_2}{2p_1p_2}+\frac12\\
&=&-\frac{\chi(F_1)}{2p_1}+\frac12-\frac{\chi(F_2)}{2p_2}+\frac12\\
&=&g(K_1)+g(K_2).
\end{eqnarray*}

On the other hand,
let $G$ be a minimal genus rational Seifert surface for $K_1\#K_2$. We may assume that $G$ is transverse to $A$. We may further assume $G\cap A$ consists of $p$ essential arcs in $A$, otherwise we can compress $G$ using the disk bounded by a circle in $A$ and replace $G$ with a new rational Seifert surface with genus smaller than or equal to the genus of $G$. Set $G_i=G\cap M_{Z_i}$. Each surface $G_i$ is a rational Seifert surface for $K_i$. It follows that
\begin{eqnarray*}
g(K_1)+g(K_2)&\le&-\frac{\chi(G_1)}{2p}+\frac12-\frac{\chi(G_2)}{2p}+\frac12\\
&=&-\frac{\chi(G)}{2p}+\frac12\\
&=&g(K_1\#K_2).
\end{eqnarray*}
This proves (\ref{eq:GenusSum}).

A careful look at the above argument will prove the second statement in the lemma. Suppose that both $K_1$ and $K_2$ are fibered. Let $\phi_i: M_{Z_i}\to S^1$ be a fibration with fiber surface $F_i$. The map $\phi_1^{p_{2}}: M_{Z_1}\to S^1$ is a fibration with fiber surface $p_{2}F_1$. Similarly, $\phi_2^{p_1}$ is a fibration of $M_{Z_2}$ with fiber surface $p_1 F_2$. We may assume $\phi_1^{p_2}|A=\phi_2^{p_1}|A$. Thus $\phi_1^{p_2}\cup\phi_2^{p_1}: M_{Z_1}\cup M_{Z_2}\to S^1$ defines a fibration of $M_Z$ over $S^1$. For the converse, suppose that $K_1\#K_2$ is fibered in $Z$. Let $\phi: M_Z \to S^1$ be a fibration with $G$ a fiber surface. Since $G\cap A$ consists of essential arcs, we may assume $\phi|A$ is a fibration. Therefore, $\phi|M_{Z_i}$ is a fibration for $i=1,2$.
\end{proof}

Recall that the null slope of $K$ is the unique isotopy class of the curve $\alpha$ in $\partial M$ that generates the kernel of the map $H_1(\partial M; \mathbb Q) \to H_1(M; \mathbb Q)$ induced by the inclusion map of $\partial M$ into $M$. Note that the class of $\alpha$, as an element of $H_1(\partial M)$, can be written as $\alpha= q'\mu + p'\lambda$, for some integers $q'$ and $p'>0$. Note also that $p$, the order of $[K]$ in $H_1(Y)$, is a multiple of $p'$.

A {\it Morse surgery} on $K$ is filling $M$ along a curve $m \cdot \mu + \lambda$, for some integer $m$. It is a well-known fact that Dehn surgery on $K$ with coefficient $q'/p'$ can be realized as Morse surgery with coefficient $m$ on the knot $K \# O_{p'/r}$ inside $Y \# L(p', r)$ where $q'=mp'-r$ with $0\le r <p'$, where $O_{p'/r}$ is the image of $K$ in $L(p',r)$ when $K$ is the unknot, $Y=S^3$, and the lens space $L(p',r)$ is obtained by performing $p'/r$ on the other component of the link in Figure~\ref{Morse}. This follows from the Slam-Dunk move. See~\cite[p. 501]{Cochran1988}. Let $\alpha'$ be the null slope on $K \# O_{p'/r}$, then $\alpha'$ is the framing with slope $m$.
We point out that in order to make sense of the surgery coefficient in our setting we first need to choose a longitude $\lambda$ for $K$. See Figure~\ref{Morse}.

\begin{figure}[t!]
 \begin{center}
 \subfigure[]
{
 \includegraphics[scale=.5]{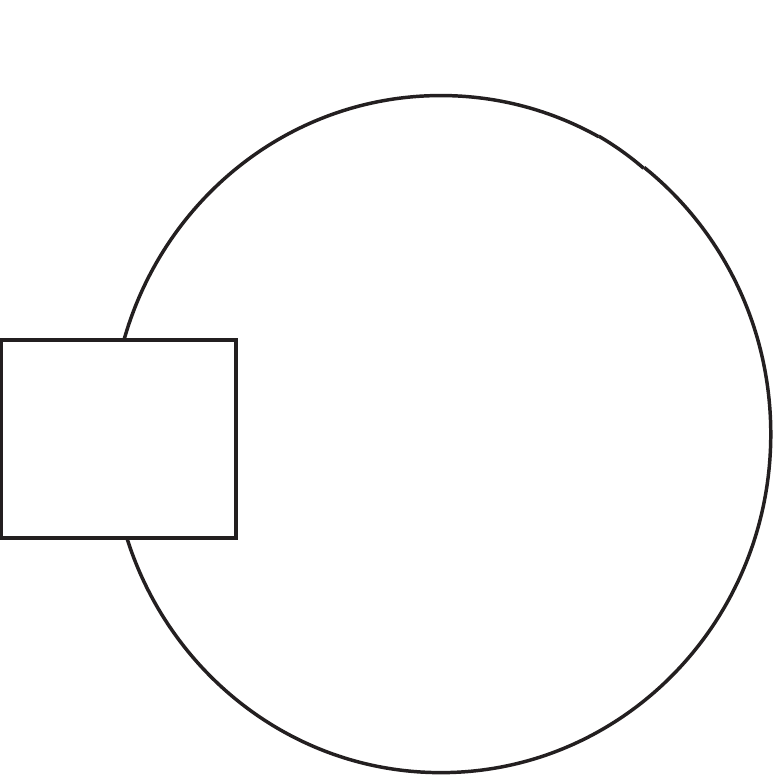}
\label{fig1:subfig1}
 }
\subfigure[]
{
  \includegraphics[scale=.5]{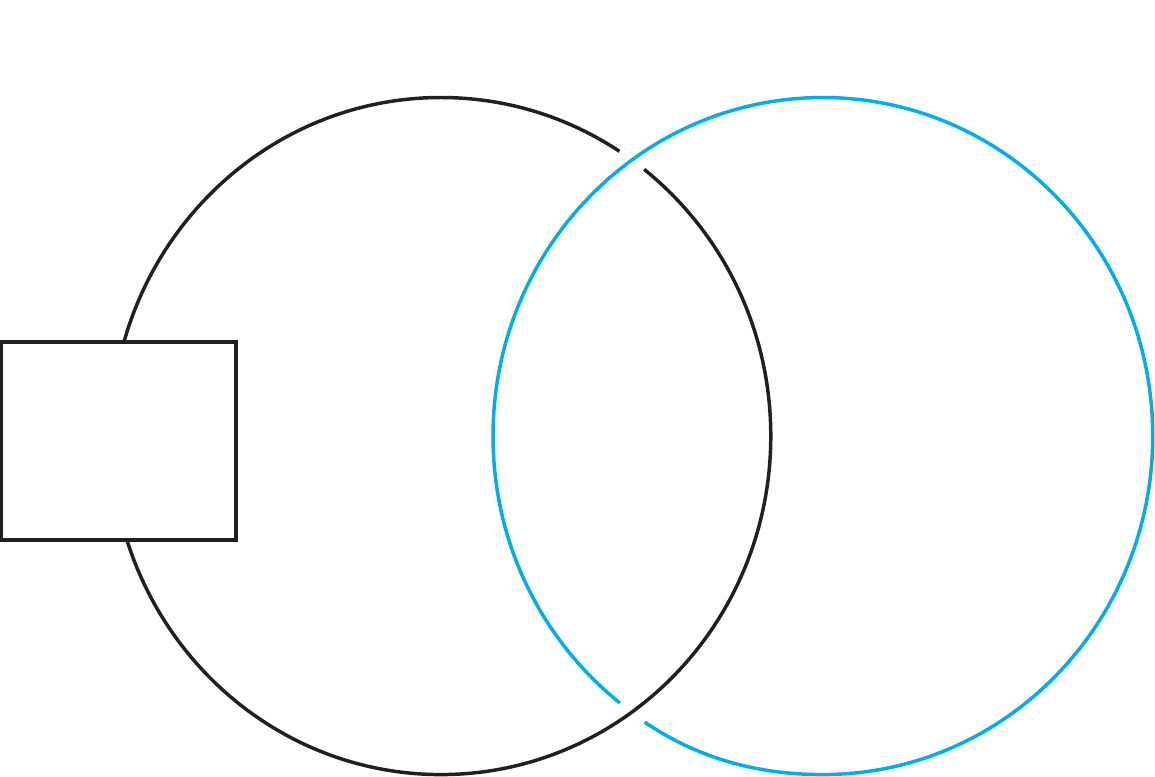}
   \label{fig1:subfig2}
}
\put(-285,41){\huge $K$}
\put(-285,91){$q'/p'$}
\put(-163,41){\huge $K$}
\put(-153,91){$m$}
\put(-12,91){$p'/r$}
\caption{\small{Dehn surgery on $K\subset Y$ can be realized as a Morse surgery on a knot inside the connected sum of $Y$ with a lens space. (a) Dehn surgery on $K\subset Y$ with coefficient $q'/p'$. (b) The corresponding Morse surgery on $K \# O_{p'/r}\subset Y\# L(p',r)$ with coefficient $m$ where $q'=mp'-r$ with $0\le r <p'$. The blue curve with its surgery coefficient $p'/r$ represents the lens space summand $L(p',r)$.}}
\label{Morse}
\end{center}
\end{figure}

\begin{cor}\label{cor:ReduceToMorse}
Using the notation of this section,
\newline (a) If
$K \# O_{p'/r}\subset Y \# L(p',r)$ is fibered, then $K$ is fibered in $Y$.
\newline (b) Let $F$ be a minimal genus rational Seifert surface for $K$ and $\widehat F\subset Y_{\alpha}$ be the closed surface obtained by capping off $\partial F$ with disks. Then there exists a minimal genus rational Seifert surface $F'$ for $K'=K \# O_{p'/r} \subset Y \# L(p',r)$ such that $\chi(\widehat F)=\chi(\widehat{ F'})$, where $$\widehat{ F'}\subset Y_{\alpha}=(Y\# L(p',r))_{\alpha'}(K \# O_{p'/r})$$ is obtained from   $F'$ by capping off its boundary with disks.
\end{cor}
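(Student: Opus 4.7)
For part (a), I would invoke Lemma~\ref{lem:ConnSum} directly. Since the complement of $O_{p'/r}$ in $L(p',r)$ is a solid torus, the knot $O_{p'/r}$ fibers tautologically over $S^1$ via its meridional disks, so Lemma~\ref{lem:ConnSum} forces $K$ to be fibered whenever $K' = K \# O_{p'/r}$ is fibered.

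For part (b), the plan is to exhibit $F'$ via the connected-sum construction in the proof of Lemma~\ref{lem:ConnSum} and use the additivity of normalized genus. A key preliminary step is the divisibility $p' \mid p$, where $p$ is the order of $[K]$ in $H_1(Y)$: combining $p[\lambda] = k[\mu]$ in $H_1(Y \setminus \nu^\circ(K))$ with the rational relation $q'[\mu] + p'[\lambda] = 0$ defining the null slope (and using $\gcd(q',p') = 1$) forces $p' \mid p$. Consequently $[K']$ has order $p$ in $H_1(Y \# L(p',r))$, and a minimum-genus rational Seifert surface with minimum boundary count has $c_F = p/p'$ boundary circles for $K$, versus $c_{F'} = p$ for $K'$ (since the null slope of $K'$ is a framing).

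Taking $G$ to be a meridional disk of the solid torus complement of $O_{p'/r}$ (so $\chi(G) = 1$), I would form $F'_0 = p' F \cup p G$ by gluing along a matched collection of $pp'$ essential arcs in the splitting annulus, and arrange the matching so that $F'_0$ decomposes into $p'$ identical connected components. Each component is a minimum-genus rational Seifert surface for $K'$ with $p$ boundary circles and Euler characteristic $\chi(F) + p/p' - p$ (as forced by $g(K') = g(K) + g(O_{p'/r})$ from Lemma~\ref{lem:ConnSum}). Letting $F'$ be one such component, capping off yields
\[
\chi(\widehat{F'}) = \chi(F') + p = \chi(F) + p/p' = \chi(F) + c_F = \chi(\widehat F),
\]
as desired.

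The main obstacle I foresee is confirming that the matching of arcs can be chosen so that $F'_0$ splits into $p'$ identical components realizing the claimed Euler characteristic and boundary count. I expect this to follow by grouping the $p'$ copies of $F$ with $p$ copies of $G$ into $p'$ disjoint bundles---each containing one copy of $F$ along with $p/p'$ copies of $G$ joined along $p$ essential arcs in the splitting annulus---and verifying via a direct count (using that each boundary curve of $F$ or $G$ meets each meridian of $\partial A$ in $p'$ points) that each bundle carries exactly $p$ boundary circles.
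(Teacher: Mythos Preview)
Your proposal is correct and follows the same strategy as the paper. Part (a) is identical. For part (b), the only difference is that the paper builds $F'$ directly by gluing \emph{one} copy of $F$ to $l=p/p'$ copies of the meridional disk along the $p$ arcs in the splitting annulus (this works because $F\cap A$ already has $p$ arcs and $lG\cap A$ has $l\cdot p'=p$ arcs), whereas you form the full $p'F\cup pG$ from Lemma~\ref{lem:ConnSum} and then argue it decomposes into $p'$ identical pieces. Your ``main obstacle'' therefore dissolves: each of your bundles \emph{is} the paper's direct construction, so no extra verification is needed, and the Euler characteristic count $\chi(\widehat{F'})=\chi(F')+p=\chi(F)+l=\chi(\widehat F)$ is the same in both arguments.
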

\begin{proof}
The statement (a) follows directly from Lemma~\ref{lem:ConnSum}. Thus, we only need to prove (b). Let $l=p/p'$ be the number of components of $\partial F$. Then \begin{equation}\label{chi1}\chi(\widehat F)=\chi(F)+l.\end{equation} Similar to the proof of Lemma~\ref{lem:ConnSum}, a Thurston norm minimizing rational Seifert surface for $K'$ may be obtained by gluing $F$ and $lD_{p'/r}$ along $p$ arcs. We call this rational Seifert surface surface $F'$. Then
 \begin{equation}\label{chi2}
\chi(F')=\chi(F)+l-p.
\end{equation}
 The order of $[K']$ in $H_1(Y\#L(p',r))$ is $p$, that is equal to the order of $[K] \in H_1(Y)$. Also, $[\mu]\cdot[\partial F']=p$. Combining these two facts we see that $F'$ is a minimal genus rational Seifert surface for $K'$.
By the discussion in the paragraph before this corollary, we have that the null slope of $K'$ is a framing. See Figure~\ref{Morse}. Hence, $\partial F'$ has exactly $p$ components. This observation, together with Equations~\eqref{chi1} and~\eqref{chi2} will give us that
\begin{align*}
\chi(\widehat{F'}) &=\chi(F')+p\\
                          &=\chi(F)+l\\
                          &=\chi(\widehat F). \qedhere
\end{align*}
\end{proof}

The main idea that will be used to prove Theorem~\ref{SurfaceBundle} is to compare the exact triangle of Theorem~\ref{ExactTriangle} with another exact sequence that differs only in one term with~\eqref{SurgeryTriangle}. The rest of the effort will be devoted to prove that those terms are also isomorphic. In Section~\ref{section:Background} we observed that for a relative Spin$^c$ structure $\xi$, $C_{\xi}= CFK^\infty(Y, K, \xi)$ is a chain complex. Moreover, every relative Spin$^c$ structure has an Alexander grading. We have the following short exact sequence
\begin{equation}\label{SES}
0\to C_{\underline{\xi}}\{i\ge 0\text{ and }j<1\} \to C_{\underline{\xi}}\{i\ge 0 \text{ or } j\ge 1\} \xrightarrow{h^+_{\underline{\xi},1}} C_{\underline{\xi}}\{j\ge 1\}\to 0,
\end{equation}
where $\underline{\xi} \in \underline{\mathrm{Spin}^c}(Y, K)$ is a relative Spin$^c$ structure with the least Alexander grading (see Equation~\eqref{AlexanderGrading}). We point out that $h^+_{\underline{\xi},1}$ is just the horizontal projection. Since $j\ge 1$ (instead of~$j\ge 0$), we use a different notation for the horizontal projection from that of~\eqref{projection}.

The goal of the next two lemmas is to replace the complexes in~\eqref{SES} with three other complexes so that, after taking homology, two out of three of the replaced terms will be the summands of the corresponding terms of~\eqref{SurgeryTriangle}.
{\lemma\label{CFKidentification} In the short exact sequence of~\eqref{SES}, we have
$$\displaystyle
H_*(C_{\underline{\xi}}\{i\ge 0\text{ and }j<1\}) \cong \widehat{HFK}(Y, K, \underline{\xi}),$$
where $\underline{\xi}$ is a Spin$^c$ structure with the least Alexander grading.}
\begin{proof}
We show that
\[
C_{\underline{\xi}}\{i\ge 0\text{ and }j<1\}=C_{\underline{\xi}}\{i= 0\text{ and }j=0\}.
\]
Taking homology of both sides gives us the statement of the lemma. Fix a doubly pointed Heegaard diagram $(\Sigma, \mbox{\boldmath${\alpha}$},\mbox{\boldmath$\beta$}, w, z)$ and a rational Seifert surface $F$ for the pair $(Y, K)$. Let $[\mathbf{x}, i, j]\in C_{\underline{\xi}}\{i\ge 0\text{ and }j<1\}$. Using Equation~\eqref{SpinStrFormula}, we get that
\[
\langle c_1(\underline{\xi}), [F,\partial F]\rangle = \langle c_1(\mathfrak{s}_{\mathbf{w},\mathbf{z}}(\mathbf x)+(i-j)PD[\mu]), [F,\partial F]\rangle .
\]
By Lemma~\ref{ChernClassFormula}, $\langle c_1(\underline{\xi}), [F,\partial F]\rangle=\chi(F)$. Therefore,
\begin{eqnarray*}
 \chi(F)  &=& \langle c_1(\mathfrak{s}_{\mathbf{w},\mathbf{z}}(\mathbf x) + (i-j) PD[\mu]),[F,\partial F] \rangle\\
&=& \langle c_1(\mathfrak{s}_{\mathbf{w},\mathbf{z}}(\mathbf x)), [F,\partial F]\rangle + 2(i-j) \langle PD[\mu],[F,\partial F] \rangle\\
&\ge& \chi(F) + 2(i-j)p,
\end{eqnarray*}
which in turn forces that $i=0$ and $j=0$. The last inequality, again, follows from Lemma~\ref{ChernClassFormula}.
\end{proof}
Recall the natural map $G_{Y, K}: \underline{\mathrm{Spin}}^c(Y, K) \to \mathrm{Spin}^c(Y)$ which associates an ``absolute" Spin$^c$ structure to every relative one in the knot exterior. Recall also $\Xi: \mathrm{Spin}^c(Y_n) \to \underline{\mathrm{Spin}^c}(Y, K)$, the map in Theorem~\ref{LargeSurgery}.
\begin{lemma}\label{lem:IdChain}
In the short exact sequence of Equation~\eqref{SES}, we have
\[
H_*(C_{\underline{\xi}}\{ i \ge 0 \text{ or } j \ge 1\}) \cong HF^+(Y_n, \mathfrak t) , \text{ and } H_*(C_{\underline{\xi}}\{j \ge 1\})\cong HF^+(Y,G_{Y,K}(\underline{\xi})+PD[\lambda]),
\]
where $\mathfrak t \in \mathrm{Spin}^c(Y_n)$ is a Spin$^c$ structure with $\Xi(\mathfrak t) = \underline{\xi} + PD[\mu]$, and $n\gg0$.
\end{lemma}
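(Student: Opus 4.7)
The plan is to translate the two complexes appearing in the lemma by a single-unit shift in the $j$-coordinate so that they become subcomplexes sitting over the relative Spin$^c$ structure $\underline{\xi}+PD[\mu]$, and then recognize them via Theorem~\ref{LargeSurgery} and the identifications of~\eqref{eq:identification}.

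The first step is to set up the reindexing $[\mathbf x,i,j]\mapsto[\mathbf x,i,j-1]$. Using Equation~\eqref{SpinStrFormula}, if $[\mathbf x,i,j]\in\mathfrak T(\underline{\xi})$ then $[\mathbf x,i,j-1]$ satisfies $\underline{\mathfrak s}_{w,z}(\mathbf x)+(i-(j-1))PD[\mu]=\underline{\xi}+PD[\mu]$, so it lies in $\mathfrak T(\underline{\xi}+PD[\mu])$. The differential, which depends only on the counts $n_w(\phi), n_z(\phi)$, is clearly respected by this relabeling. Hence I obtain chain isomorphisms
\begin{align*}
C_{\underline{\xi}}\{i\ge 0 \text{ or } j \ge 1\} &\cong C_{\underline{\xi}+PD[\mu]}\{i\ge 0 \text{ or } j\ge 0\} = A^+_{\underline{\xi}+PD[\mu]}(Y,K),\\
C_{\underline{\xi}}\{j\ge 1\} &\cong C_{\underline{\xi}+PD[\mu]}\{j\ge 0\}.
\end{align*}

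For the first isomorphism in the lemma, Theorem~\ref{LargeSurgery} applied at the shifted relative Spin$^c$ structure $\underline{\xi}+PD[\mu]$ gives, for $n$ sufficiently large, a Spin$^c$ structure $\mathfrak t\in\mathrm{Spin}^c(Y_n)$ with $\Xi(\mathfrak t)=\underline{\xi}+PD[\mu]$ together with a chain isomorphism $\Psi^+_{\mathfrak t,n}: CF^+(Y_n,\mathfrak t)\to A^+_{\underline{\xi}+PD[\mu]}(Y,K)$, yielding the desired isomorphism on homology. For the second, Equation~\eqref{eq:identification} gives $C_{\underline{\xi}+PD[\mu]}\{j\ge 0\}\cong CF^+(Y,G_{Y,K}(\underline{\xi}+PD[\mu]+PD[\lambda]))$. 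I then invoke the equivariance~\eqref{eq:Equivariant} of $G_{Y,K}$ and observe that $\iota(PD[\mu])=0\in H^2(Y)$, because under Poincar\'e--Lefschetz duality $\iota$ corresponds to the inclusion-induced map $H_1(M)\to H_1(Y)$ and the meridian $\mu$ bounds a disk in $\nu(K)\subset Y$. Consequently
\[
G_{Y,K}(\underline{\xi}+PD[\mu]+PD[\lambda])=G_{Y,K}(\underline{\xi})+\iota(PD[\lambda])=G_{Y,K}(\underline{\xi})+PD[\lambda],
\]
giving the second stated isomorphism after passing to homology.

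The main point requiring care, rather than a genuine obstacle, is the Spin$^c$ bookkeeping: verifying that the shift $j\mapsto j-1$ exactly twists the relative Spin$^c$ structure by $PD[\mu]$ (immediate from~\eqref{SpinStrFormula}), and that $\iota(PD[\mu])$ is trivial in $H^2(Y)$ (a routine check via the long exact sequence of the pair $(Y,K)$). Once these are in place, no further analytic work is needed; the proof is a purely formal consequence of the large surgery formula and the identifications of~\eqref{eq:identification}.
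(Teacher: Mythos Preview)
Your proof is correct and follows essentially the same approach as the paper: the $j$-shift you set up is precisely the content of \cite[Proposition~3.2]{Ozsvath2010} cited there, and the first isomorphism then follows from Theorem~\ref{LargeSurgery} exactly as you say. For the second isomorphism the paper takes a slightly more direct route, using the $U$-action isomorphism $C_{\underline{\xi}}\{j\ge 1\}\cong C_{\underline{\xi}}\{j\ge 0\}$ (shifting both $i$ and $j$, hence staying over $\underline{\xi}$) and then applying~\eqref{eq:identification} directly, which avoids your extra step of checking $\iota(PD[\mu])=0$; but your variant is equally valid.
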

\begin{proof}
Using \cite[Proposition~3.2]{Ozsvath2010}, we get that
\begin{equation}\label{eq:VertShift}
C_{\underline{\xi}}\{ i \ge 0 \text{ or } j \ge 1\} \cong C_{\underline{\xi} + PD[\mu]}\{ i \ge 0 \text{ or } j \ge 0\},
\end{equation}
as $(\mathbb Z \oplus \mathbb Z)$--filtered chain complexes. This together with Theorem~\ref{LargeSurgery} will give us the first isomorphism. For the second isomorphism, we have
\[H_*(C_{\underline{\xi}}\{j \ge 1\})\cong H_*(C_{\underline{\xi}}\{j \ge 0\})\cong HF^+(Y,G_{Y,K}(\underline{\xi})+PD[\lambda]),\]
where the last isomorphism follows from the identification in~\eqref{eq:identification} together with Equation~\eqref{eq:Equivariant}.
\end{proof}

\noindent Similar to~\eqref{SpinEquivalence}, define
\[
[\xi]_{Y} = \left \{ \xi' \in \underline{\mathrm{Spin}}^c(Y, K) | G_{Y,K}(\xi') \in [G_{Y,K}({\xi})]_{Y}, A(\xi') = A({\xi})\right \},
\]
where $A$ denotes the Alexander grading defined in~\eqref{AlexanderGrading}.
Also, define
\[
 C_{[{\xi}]_{Y}}\{i\ge 0\text{ and }j<1\} = \bigoplus_{\xi' \in [{\xi}]_{Y}} C_{\xi'}\{i\ge 0\text{ and }j<1\}.
\]

\begin{prop}\label{prop:ZeroSurgIsom}
Let $K$ be a knot in an L-space $Y$, and $F$ be a minimal genus rational Seifert surface for $K$. Suppose that the null slope $\alpha$ of $K$ is a framing $\lambda$, and $g=g(F)>1$. Let $\underline{\xi}\in\mathrm{Spin}^c(Y,K)$ be a relative Spin$^c$ structure with the least Alexander grading. Let also $\mathfrak t$ be a Spin$^c$ structure on $Y_n$ with $\Xi(\mathfrak t) = \underline{\xi}+PD[\mu]$. Then we have the isomorphism
\[
\widehat{HFK}(Y, K, [\underline{\xi}]_{Y}) \cong HF^+(Y_\alpha, Q^{-1}([\mathfrak t]_{Y_{n}})).
\]
\end{prop}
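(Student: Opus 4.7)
The plan is to construct a long exact sequence with the same flavor as the surgery exact triangle of Theorem~\ref{ExactTriangle}, sharing two out of three terms with it, and then conclude that the third terms agree.

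First, for each $\xi' \in [\underline{\xi}]_Y$, apply the short exact sequence~\eqref{SES}. Taking homology of the direct sum over all such $\xi'$ yields a long exact sequence
\[
\cdots \to \bigoplus_{\xi'} H_*(C_{\xi'}\{i\ge 0\text{ and }j<1\}) \to \bigoplus_{\xi'} H_*(C_{\xi'}\{i\ge 0 \text{ or } j\ge 1\}) \xrightarrow{\bigoplus h^+_{\xi',1}} \bigoplus_{\xi'} H_*(C_{\xi'}\{j\ge 1\}) \to \cdots
\]
By Lemma~\ref{CFKidentification}, the first summand is $\widehat{HFK}(Y,K,[\underline{\xi}]_Y)$. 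By Lemma~\ref{lem:IdChain}, the middle summand equals $\bigoplus_{\xi'} HF^+(Y_n, \mathfrak t_{\xi'})$ where $\Xi(\mathfrak t_{\xi'})=\xi'+PD[\mu]$, and the third equals $\bigoplus_{\xi'} HF^+(Y, G_{Y,K}(\xi')+PD[\lambda])$.

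Second, match these middle and right-hand terms with the corresponding terms of~\eqref{SurgeryTriangle}. Since the null slope is a framing, $\langle PD[\lambda], [F,\partial F]\rangle = 0$, so translation by $PD[\lambda]$ preserves the Alexander grading; combined with the equivariance~\eqref{eq:Equivariant} of $G_{Y,K}$, this means that $\{G_{Y,K}(\xi')+PD[\lambda]:\xi'\in[\underline{\xi}]_Y\}$ is exactly the orbit $[\mathfrak s_\mathfrak t]_Y$, so
\[
\bigoplus_{\xi' \in [\underline{\xi}]_Y} HF^+(Y, G_{Y,K}(\xi')+PD[\lambda]) \;\cong\; HF^+(Y,[\mathfrak s_\mathfrak t]_Y).
\]
Similarly, the correspondence $\xi' \mapsto \mathfrak t_{\xi'}$ via the map $\Xi$ of Theorem~\ref{LargeSurgery} should identify the middle term with $HF^+(Y_n,[\mathfrak t]_{Y_n})$; this is the step where the bookkeeping of Spin$^c$ structures is most delicate, and where Lemma~\ref{lem:OneToOne} (the key technical lemma advertised in the introduction) enters to ensure the correspondence is a bijection with the correct orbit.

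Third, identify the two exact sequences. By Theorem~\ref{LargeSurgery}, the horizontal map $h^+_{\underline{\xi}}$ in the large surgery formula corresponds, for each $\xi'$, to the component of the cobordism map $f^+_{W'_n(K),\mathfrak y}$ associated to the Spin$^c$ structure $\mathfrak y = \mathfrak x + PD[S]$. Summing these over $\xi' \in [\underline{\xi}]_Y$, and using the vertical shift identification~\eqref{eq:VertShift} to get back to the $j\ge 0$ convention, should reproduce the full cobordism map
\[
F = \sum_{\mathfrak s\in\mathrm{Spin}^c(W'_n(K)),\: \mathfrak s|_{Y_n}\in[\mathfrak t]_{Y_n}} f^+_{W'_n(K),\mathfrak s}
\]
of~\eqref{eq:F3}, which is the third map in the surgery triangle~\eqref{SurgeryTriangle}. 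With this, the kernels and cokernels of $F$ agree on the two long exact sequences, and a five-lemma / diagram chase yields
\[
\widehat{HFK}(Y,K,[\underline{\xi}]_Y) \;\cong\; HF^+(Y_\alpha, Q^{-1}([\mathfrak t]_{Y_n})),
\]
as desired. The main obstacle, as expected, is the Spin$^c$-structure bookkeeping: showing that the direct sum of fiber-wise exact sequences really does assemble into the surgery triangle, with the right orbits on each side and with compatible connecting maps. The hypothesis that $\alpha = \lambda$ is a framing is what makes the two exact sequences naturally comparable (since both live over framing-type orbits), and the assumption $g(F) > 1$ is used via Lemma~\ref{ChernClassFormula} inside Lemma~\ref{CFKidentification} to pin the bottommost-grading summand to $\{i=0,j=0\}$.
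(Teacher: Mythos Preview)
There is a genuine gap in your argument, and it is precisely the step you flag as ``should reproduce the full cobordism map $F$.'' It does not. The map $F$ in the surgery triangle is
\[
F \;=\; \sum_{\mathfrak t'\in[\mathfrak t]_{Y_n}}\;\sum_{k\in\mathbb Z} f^+_{W'_n(K),\,\mathfrak x(\mathfrak t')+kPD[S]},
\]
whereas the map $\bigoplus_{\xi'} h^+_{\xi',1}$ in your short exact sequence corresponds only to the $k=1$ terms. So the two long exact sequences you are comparing do \emph{not} share the map from the $Y_n$--vertex to the $Y$--vertex, and the five-lemma does not apply directly.

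The paper closes this gap by a degree argument that uses both hypotheses you have not invoked. Using Lemma~\ref{lem:c1Evaluation} and the degree-shift formula~\eqref{degreeshift}, one computes that for $n\gg0$ and $g>1$ the $k=1$ term $h^+_{\xi'}$ has strictly higher degree than every other term $f^+_{W'_n(K),\,\mathfrak x+kPD[S]}$ with $k\ne1$; thus $f_{\mathfrak t'}=h^+_{\xi'}+\text{(lower order)}$. Since $Y$ is an L-space, each $h^+_{\xi'}$ is surjective on homology, and Lemma~\ref{lem:OneToOne} together with Corollary~\ref{cor:DiffTors} shows the same for the direct sum. A standard filtered-map argument then shows that $F$ itself is surjective with $\ker F\cong\ker(\bigoplus h^+_{\xi'})_*$, and only then can one compare the two exact sequences. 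Note in particular that your diagnosis of where $g(F)>1$ enters is wrong: Lemma~\ref{CFKidentification} holds for any $g$; the condition $g>1$ is what makes the degree inequality go the right way in the computation above.
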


We will first prove a technical lemma that will be useful to prove the proposition. The assumptions are the same as those of Proposition~\ref{prop:ZeroSurgIsom}.
Recall from Subsection~\ref{subsect:Triangle} that $[\mathfrak t]_{Y_n}\subset \mathrm{Spin}^c(Y_n)$ is the $\langle[\lambda]\rangle$--orbit that contains $\mathfrak t$, and $[\mathfrak s_{\mathfrak t}]_{Y}\subset \mathrm{Spin}^c(Y)$ is the unique $\langle[\lambda]\rangle$--orbit that is cobordant to $[\mathfrak t]_{Y_n}$ in $W'_n$. As in Subsection~\ref{subsect:Triangle}, $S$  denotes the core of the two-handle (attached to $Y$) in $W'_n$, which is a surface with boundary.

\begin{lemma}\label{lem:OneToOne}
Given $\mathfrak t'\in[\mathfrak t]_{Y_n}$, let $\varphi(\mathfrak t')=(\mathfrak x(\mathfrak t')+PD[S])|_Y$. Then $\varphi$ defines a one-to-one correspondence $[\mathfrak t]_{Y_n}\to [\mathfrak s_{\mathfrak t}]_{Y}$. Moreover, $\Xi$ defines a  one-to-one correspondence
$[\mathfrak t]_{Y_n}\to [\underline{\xi}]_Y+PD[\mu]$.
\end{lemma}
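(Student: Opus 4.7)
The plan hinges on the observation that the two one-to-one correspondences are intertwined via the identity
\[
\varphi(\mathfrak t) \;=\; G_{Y,K}(\Xi(\mathfrak t)) + PD[\lambda]_Y,
\]
where the second summand is $PD[S]|_Y$ computed by naturality of Poincar\'e--Lefschetz duality (using $[\partial S] = n[\mu] + [\lambda] = [\lambda]$ in $H_1(Y)$, which in turn uses $[\mu]_Y = 0$). Once the bijection $\Xi\colon [\mathfrak t]_{Y_n} \to [\underline{\xi}]_Y + PD[\mu]$ is established, bijectivity of $\varphi\colon [\mathfrak t]_{Y_n} \to [\mathfrak s_{\mathfrak t}]_Y$ will follow by post-composing with $G_{Y,K}$ (which is a bijection from $[\underline{\xi}]_Y + PD[\mu]$ onto $[\mathfrak s_{\mathfrak t}]_Y$, as each fiber contains at most one element of the prescribed Alexander grading) and translating by $PD[\lambda]_Y$. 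I therefore concentrate on the $\Xi$-statement.

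To show $\Xi([\mathfrak t]_{Y_n}) \subset [\underline{\xi}]_Y + PD[\mu]$, I would verify the two defining conditions of the target. The $G_{Y,K}$-image condition is automatic from the cobordism $W'_n$: $\mathfrak x(\mathfrak t')|_Y = G_{Y,K}(\Xi(\mathfrak t'))$ is cobordant to $\mathfrak t'$, hence lies in $[\mathfrak s_{\mathfrak t}]_Y = [G_{Y,K}(\underline{\xi})]_Y$. The Alexander-grading condition reduces, via Lemma~\ref{lem:c1Evaluation} and the identity $\langle a, [\widetilde F]\rangle = \langle \varepsilon(a), [F, \partial F]\rangle$ used in its proof, to showing that $\Xi(\mathfrak t') - \Xi(\mathfrak t)$ pairs trivially with $[F, \partial F]$. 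The key step is that the linear part of the affine map $\Xi$ carries $PD[\lambda]_{Y_n}$ to $PD[\lambda]_{Y,K}$ (both being Poincar\'e--Lefschetz dual to $\lambda$), and the latter pairs with $[F, \partial F]$ as $\lambda \cdot F = 0$ in $Y$, a vanishing that holds precisely because $\partial F$ consists of parallel copies of $\lambda$.

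For cardinalities, I would decompose $H_1(M) \cong \mathbb Z\langle e\rangle \oplus T$. An intersection argument against $K$ in the rational homology sphere $Y$ forces $[\mu]$ to have infinite order, so $[\mu]$ spans the $\mathbb Z$-summand, and the null-slope hypothesis places $[\lambda]$ in $T$. Since $H_1(Y) = H_1(M)/\langle\mu\rangle$ leaves $T$ untouched, the order of $[\lambda]$ in $T$ equals $p$ (the order of $[K] = [\lambda]_Y$ in $H_1(Y)$); a parallel computation on $H_1(Y_n) = H_1(M)/\langle n\mu + \lambda\rangle$ shows $[\lambda]_{Y_n}$ also has order $p$. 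Hence $|[\mathfrak t]_{Y_n}| = |[\mathfrak s_{\mathfrak t}]_Y| = p$. For the third set, the fibers of $G_{Y,K}$ are free $\mathbb Z$-torsors generated by $PD[\mu]$ (from $\ker\iota \cong H^1(K) = \mathbb Z$) on which $A$ strictly changes by $1$, and since $PD[\lambda]_{Y,K}$ preserves $A$, each fiber over $[G_{Y,K}(\underline{\xi})]_Y$ contains exactly one element of $[\underline{\xi}]_Y$, so $|[\underline{\xi}]_Y + PD[\mu]| = p$. Injectivity of $\Xi$ for large $n$ is standard, so the cardinality match yields the bijection $\Xi\colon [\mathfrak t]_{Y_n} \xrightarrow{\sim} [\underline{\xi}]_Y + PD[\mu]$, and the identity displayed above delivers the bijection $\varphi$.

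The principal obstacle is the Alexander-grading piece of target membership: making precise that the linear part $d\Xi\colon H^2(Y_n) \to H^2(Y, K)$ carries $PD[\lambda]_{Y_n}$ to $PD[\lambda]_{Y,K}$ demands a careful reading of the Ozsv\'ath--Szab\'o construction of $\Xi$, and it is at this juncture that the null-slope-is-a-framing hypothesis is essential; without it $\partial F$ would not be parallel to $\lambda$ and the geometric vanishing $\lambda \cdot F = 0$ would fail.
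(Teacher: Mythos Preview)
Your proposal is correct and follows essentially the same route as the paper: both rest on showing that $[\lambda]$ has order $p$ in each of $H_1(M)$, $H_1(Y)$, and $H_1(Y_n)$, and on the vanishing $\langle PD[\lambda]_{Y,K},[F,\partial F]\rangle=0$ (your $\lambda\cdot F=0$), from which target membership for $\Xi$ follows. The paper then concludes by observing that $\varphi$ and $\Xi$ are $PD[\lambda]$-equivariant maps between $\mathbb Z/p\mathbb Z$-torsors, hence bijections; you reach the same conclusion via cardinality counts plus injectivity of $\Xi$, and your identity $\varphi=G_{Y,K}\circ\Xi+PD[\lambda]_Y$ linking the two maps is a clean observation the paper does not make explicit.
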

\begin{proof}
Viewing $\lambda$ as a curve in $M= Y\setminus \nu^{\circ}(K)$, its homology class represents an element in $H_1(M)$, $H_1(Y)$ and $H_1(Y_n)$. We will show that $[\lambda]$ has order $p$ in each of these three homology groups. Clearly $[\lambda]$ has order $p$ as an element of $H_1(Y)$, since $[\lambda]=[K]\in H_1(Y)$. Having assumed that the framing $\lambda$ is the same as the null slope of $K$, the order of $[\lambda]$, when viewed as an element of $H_1(M)$, is also $p$. Suppose that the order of $[\lambda]\in H_1(Y_n)$ is $0<r \le p$. That is, $r[\lambda]=[0]\in H_1(Y_n)$. Since $Y_n$ is obtained by gluing a solid torus to $M$ along $n\cdot \mu + \lambda$, we get that
\[r[\lambda]=s(n[\mu]+[\lambda])\in H_1(M),\]
for some integer $s$. Since $[\lambda]\in H_1(M)$ is a torsion element while $[\mu]$ is non-torsion, we must have $s=0$. Hence $r=p$.

By the definition of $\varphi$, $\varphi(\mathfrak t')$ is cobordant to $\mathfrak t'$ in $W'_n$, so $\varphi(\mathfrak t')\in [\mathfrak s_{\mathfrak t}]_{Y}$. As for $\Xi$, note that $\Xi(\mathfrak t)=\underline{\xi}+PD[\mu]$. Every $\mathfrak t'\in[\mathfrak t]_{Y_n}$ has the form $\mathfrak t+kPD[\lambda]$ for some integer $k$. Then, using Equation~\eqref{eq:Equivariant}, $G_{Y,K}(\Xi(\mathfrak t'))=G_{Y,K}(\underline{\xi}+PD[\mu])+kPD[\lambda]$.
Using Equation~\eqref{eq:AlexanderDifference},
\[
A(\underline{\xi}+PD[\mu])-A(\Xi(\mathfrak t')) = k\cdot \frac{\langle PD[\lambda], [F, \partial F]\rangle}{[\mu] \cdot [\partial F]} = 0.
\]
That is, $\Xi(\mathfrak t')\in  [\underline{\xi}]_Y+PD[\mu]$.

All three sets of Spin$^c$ structures in the lemma are affine spaces over $\langle[\lambda]\rangle\cong\mathbb Z/p\mathbb Z$.
Moreover, both maps $\varphi$ and $\Xi$ are equivariant with respect to the action of $PD[\lambda]$. Our conclusion then follows.
\end{proof}

\begin{proof}[Proof of Proposition~\ref{prop:ZeroSurgIsom}]
We recall that $F$ in the long exact sequence \eqref{SurgeryTriangle} is induced by Equation~\eqref{eq:F3}, which can be rewritten as
\[
\sum_{\mathfrak t'\in[\mathfrak t]_{Y_n}}f_{\mathfrak t'},
\]
where
\begin{equation}\label{eq:F3'}
f_{\mathfrak t'}=\sum_{k\in\mathbb Z}f^+_{W'_n(K),\mathfrak x(\mathfrak t')+kPD[S]}.
\end{equation}
Here $\mathfrak x=\mathfrak x(\mathfrak t') \in \mathrm{Spin}^c(W'_n(K))$ is as in Theorem~\ref{LargeSurgery}.

Fix $\mathfrak t'\in[\mathfrak t]_{Y_n}$, and let $\xi'=\Xi(\mathfrak t')$. Under the identifications in Lemma~\ref{lem:IdChain},
the maps $v^+_{\xi'}$ and $h^+_{\xi'}$ correspond to Spin$^c$ structures $\mathfrak x$ and $\mathfrak x+PD[S]$ on the two-handle cobordism $W'_n$, respectively. Note that the class $[S]$ represents an element in $H_2(W'_n, \partial W'_n)$.


Using the degree shift formula~\eqref{degreeshift} we see that the difference of the degrees of $f^+_{W'_n(K),\mathfrak x(\mathfrak t')}$ and $f^+_{W'_n(K),\mathfrak x(\mathfrak t')+kPD[S]}$ is
\begin{equation}\label{eq:SquareDiffernce}
\frac14(c_1(\mathfrak x)^2 - c_1(\mathfrak x+kPD[S])^2) = -k^2PD[S]^2 - k \langle c_1(\mathfrak x)\cup PD[S], [W'_n,\partial W'_n]\rangle ,
\end{equation}
where $[W'_n, \partial W'_n]$ is the fundamental class of $W'_n$. Since $H^2(W'_n(K);\mathbb Q)\cong\mathbb Q$, there exists a rational number $r$ with the property that
\begin{equation}\label{eq:Multiple}
c_1(\mathfrak x)= r\cdot PD[S].
\end{equation}
Let $\widetilde F$ be the capped off rational Seifert surface in $W'_n$. Then $H_2(W'_n(K);\mathbb Q)$ is generated by $[\widetilde F]$. Therefore,
\begin{equation}\label{eq:Multiple2}
\displaystyle r=\frac{\langle c_1(\mathfrak x), [\widetilde F]\rangle}{\langle PD[S], [\widetilde F]\rangle}.
\end{equation}

\noindent By Lemma~\ref{ChernClassFormula},
we have
\[\langle c_1(\underline{\xi}+PD[\mu]),[F,\partial F]\rangle=\chi(F)+2p=\chi(\widetilde F)+p.
\]
Lemma~\ref{lem:c1Evaluation} then implies that
\[\langle c_1(\mathfrak x), [\widetilde F]\rangle=\chi(\widetilde F)-pn.
\]
This, combined with Equations~(\ref{eq:PD[S]}, \ref{eq:Multiple}, \ref{eq:Multiple2}), we get that the difference in (\ref{eq:SquareDiffernce}) is
\begin{eqnarray}
\frac14(c_1(\mathfrak x)^2 - c_1(\mathfrak x+kPD[S])^2) &=& -k^2 PD[S]^2(1+ \frac{\langle c_1(\mathfrak x), [\widetilde F]\rangle}{k\langle PD[S], [\widetilde F]\rangle})\nonumber\\
                                                                                  &=&  -k^2 PD[S]^2(1+ \frac{\chi(\widetilde F) -pn}{knp})\nonumber\\
                                                                                  &=& -k^2 PD[S]^2(1-\frac1k+\frac{2-2g}{knp}).\label{eq:EvalDiff}
\end{eqnarray}
Assume that $n\gg0$. Since $W'_n(K)$ is a negative definite four-manifold, we see that $-k^2 PD[S]^2>0$. Also, $1-\frac1k>0$ unless $k=1$. So the right hand side of (\ref{eq:EvalDiff}) is positive provided that $k\ne1$. It is negative when $k=1$ and $g>1$. That is, when $g>1$, $v^+_{\xi'}$ has degree lower than that of $h^+_{\xi'}$, but higher than any of other terms in \eqref{eq:F3'}. In other words, the map in~\eqref{eq:F3'} has the form
\begin{equation}\label{eq:LowerOrder}
h^+_{\xi'}+\text{lower order terms}.
\end{equation}

\noindent Since $Y$ is an L-space, $h^+_{\xi'}$ induces a surjective map in homology. Lemma~\ref{lem:OneToOne} then implies that
\[
(h^+_{[\underline{\xi}+PD[\mu]]_{Y}})_*=(\bigoplus_{\xi' \in [\underline{\xi}+PD[\mu]]_{Y}} h^+_{\xi'})_*: HF^+(Y_n, [\mathfrak t]_{Y_n})\to HF^+(Y, [\mathfrak s_{\mathfrak t}]_{Y})
\]
is surjective.

\noindent Using Corollary~\ref{cor:DiffTors}, we see that the degrees of $h^+_{\xi_1}$ and $h^+_{\xi_2}$ are equal for any two $\xi_1,\xi_2\in [\underline{\xi}+PD[\mu]]_{Y}$. It follows from (\ref{eq:LowerOrder}) that
\[
F=(h^+_{[\underline{\xi}+PD[\mu]]_{Y}})_*+\text{lower order terms}.
\]
Since $(h^+_{[\underline{\xi}+PD[\mu]]_{Y}})_*$ is also surjective,
a standard algebraic argument implies that $F$ is surjective. Moreover,
\begin{equation}\label{eq:KerIsom}
\ker F\cong \ker(h^+_{[\underline{\xi}+PD[\mu]]_{Y}})_*.
\end{equation}

\noindent Using the exact sequence of~\eqref{SES}, we get the short exact sequence
\begin{equation}\label{Summand}
0\to C_{[\underline{\xi}]_{Y}}\{i\ge 0\text{ and }j<1\} \to C_{[\underline{\xi}]_{Y}}\{i\ge 0 \text{ or } j\ge 1\} \xrightarrow{\bigoplus h^+_{\xi,1}} C_{[\underline{\xi}]_{Y}}\{j\ge 1\}\to 0,
\end{equation}
where the direct sum on the second map is taken over all $\xi \in [\underline{\xi}]_{Y}$. It follows from~\eqref{eq:VertShift} that $h^+_{\xi,1}=h^+_{\xi+PD[\mu]}$. So the second map in~\eqref{Summand} is $h^+_{[\underline{\xi}+PD[\mu]]_{Y}}$.

We compare the exact sequence induced from (\ref{Summand}) with the exact sequence of Theorem~\ref{ExactTriangle}. In the latter sequence, we only need to take the orbit of the Spin$^c$ structure on $Y_n$ that gets mapped to $\underline{\xi}+PD[\mu]$ under $\Xi$.
Since both $(h^+_{[\underline{\xi}+PD[\mu]]_{Y}})_*$ and $F$ are surjective, our conclusion follows from Lemma~\ref{CFKidentification}, Lemma~\ref{lem:IdChain}, and (\ref{eq:KerIsom}).
\end{proof}

\begin{proof}[Proof of Theorem~\ref{SurfaceBundle}]
We first deal with the case that $\alpha$ is a framing. Let $g$ be the genus of a minimal genus rational Seifert surface for $K$.
If $g>1$, the assumption that $Y_\alpha$ fibers over the circle  together with \cite[Theorem~5.2]{Ozsvath2004e} will give that
\[\bigoplus_{\mathfrak s\in\mathrm{Spin}^c(Y_{\alpha}),\langle c_1(\mathfrak s),[\widehat{F}]\rangle=\chi(\widehat{F})}HF^+(Y_\alpha, \mathfrak t) \cong \mathbb Z.
\]
Therefore, Proposition~\ref{prop:ZeroSurgIsom} implies that
\[
\bigoplus_{\xi\in\mathrm{Spin}^c(Y_{\alpha}),\langle c_1(\xi),[F,\partial F]\rangle=\chi({F})}\widehat{HFK}(Y, K, \xi) \cong \mathbb Z.
\]
 Using \cite[Theorem~2.3]{Ni2014}, $K$ is fibered. For the case $g =1$, we need to use the twisted version of the exact triangle of \eqref{SurgeryTriangle}. All the steps are analogous to the proof for the case $g>1$. See~\cite{Ni2009b} where the exact triangle is obtained for a null-homologous knot. Finally, using Theorem~\ref{thm:S1S2fiber} for the case $g = 0$, the result follows.

If $\alpha$ is not a framing, by the paragraph before Corollary~\ref{cor:ReduceToMorse}, $Y_\alpha$ can be obtained by performing a Morse surgery on $K\#O_{p'/r}$ in the L-space $Y\#L(p',r)$. The previous case implies that $K\#O_{p'/r}$ is fibered. Hence, using Corollary~\ref{cor:ReduceToMorse}, $K$ is fibered.
\end{proof}

\begin{proof}[Proof of Theorem~\ref{NormMinimizing}]
Similar to the proof of Theorem~\ref{SurfaceBundle}, we first deal with the case that $\alpha$ is a framing. Let $F$ be a Thurston norm minimizing rational Seifert surface for $K$. Without loss of generality, we may assume $F$ is of minimal genus. If $g(F)\le1$, $\widehat{F}$ is a sphere or torus, hence must be Thurston norm minimizing. If $g(F)>1$, Lemma~\ref{ChernClassFormula} implies that there exists $\underline \xi\in\underline{\mathrm{Spin}^c}(Y,K)$ such that
\[
\widehat{HFK}(Y,K,\underline \xi)\ne0 \text{ and } \langle c_1(\underline \xi),[F,\partial F]\rangle=\chi(F).
\]
Proposition~\ref{prop:ZeroSurgIsom} implies that  $HF^+(Y_\alpha, \mathfrak s)\ne0$ for some $\mathfrak s\in\mathrm{Spin}^c(Y_\alpha)$ with $\langle c_1(\mathfrak s),[\widehat F]\rangle=\chi(\widehat F)$. Hence $\widehat F$ is Thurston norm minimizing by the adjunction inequality \cite[Theorem~7.1]{Ozsvath2004a}.

If $\alpha$ is not a framing, as before, $Y_\alpha$ can be obtained by performing a Morse surgery on $K\#O_{p'/r}$ in $Y\#L(p',r)$.
Let $F'$ be the minimal genus rational Seifert surface for $K\#O_{p'/r}$ as constructed in Corollary~\ref{cor:ReduceToMorse}. Let also $\widehat {F'}$ be its extension to the $m$--surgery on $K\#O_{p'/r}$ in $Y\#L(p',r)$. From the previous case, we know that $\widehat {F'}$ is Thurston norm minimizing. Hence, using Part~(2) of Corollary~\ref{cor:ReduceToMorse}, $\widehat {F}$ is also Thurston norm minimizing.
\end{proof}


\section{Directions for future research}
\subsection{Floer simple knots in L-spaces and fiberedness}
Let $K\subset Y$ be a knot in an L-space $Y$ that admits some $S^1 \times S^2$ surgery. We showed in Theorem~\ref{thm:S1S2fiber} that the complement of $K$ in $Y$ fibers over the circle. Using \cite[Proposition~7.8]{Rasmussen2015}, we conclude that every Morse surgery on $K$ (except for the one that results in $S^1 \times S^2$) will result in an L-space. As pointed out in the introduction if $Y=S^3$, then any knot with an L-space surgery will be fibered. For an arbitrary L-space $Y$, however, this is not always the case. Lidman and Watson in \cite{Lidman2012} constructed examples of non-fibered knots in L-spaces with L-space surgeries. It is known that if a Floer simple knot $K$ in an L-space is primitive, and the knot complement is irreducible, then $K$ is fibered~\cite[Theorem~6.5]{Boyer2012}\footnote{\cite[Theorem~6.5]{Boyer2012} is stated for a primitive knot in a lens space. The same proof can be applied to get the theorem we stated.}. Recall that a knot $K\subset Y$ is primitive if $[K] \in H_1(Y)$ is a generator. Let $[K]^{\perp}$ denote the orthogonal complement of the homology class $[K]\in H_1(Y)$ with respect to the linking form of $Y$. With having the notation of this section in place, we can reformulate that theorem as follows:
\begin{thm} \label{GeneralFiberedness}
Let $Y$ be an L-space, $K\subset Y$ be a Floer simple knot with irreducible complement. If $[K]^{\perp} = 0$, then $K$ is fibered.
\end{thm}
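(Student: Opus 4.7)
The plan is to reduce Theorem~\ref{GeneralFiberedness} directly to \cite[Theorem~6.5]{Boyer2012}, which, as already recalled just above the theorem statement, asserts that a primitive Floer simple knot in an L-space with irreducible complement is fibered. Thus the only new thing that needs checking is that the condition $[K]^{\perp} = 0$ is equivalent to $K$ being primitive (i.e.\ to $[K]$ generating $H_1(Y)$). Once this algebraic equivalence is in place, the hypotheses of Theorem~\ref{GeneralFiberedness} match those of the Boyer--Gordon--Watson result word for word.

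To establish the equivalence, consider the homomorphism $\phi_K : H_1(Y) \to \mathbb{Q}/\mathbb{Z}$ defined by $\phi_K(x) = \mathrm{lk}([K], x)$, where $\mathrm{lk}$ is the linking form of $Y$. Then by definition $\ker \phi_K = [K]^{\perp}$. Since the image of $\phi_K$ is a finite subgroup of $\mathbb{Q}/\mathbb{Z}$, hence cyclic, the quotient $H_1(Y)/[K]^{\perp}$ is cyclic. Therefore the hypothesis $[K]^{\perp} = 0$ forces $H_1(Y)$ itself to be cyclic, say $H_1(Y) \cong \mathbb{Z}/n$ with generator $g$. Nondegeneracy of $\mathrm{lk}$ gives $\mathrm{lk}(g,g) = j/n$ with $\gcd(j,n) = 1$, so writing $[K] = k \cdot g$ one computes directly that $[K]^{\perp} = \{ mg : (n/\gcd(n,k)) \mid m\}$, which has order $\gcd(n,k)$. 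Consequently, $[K]^{\perp} = 0$ if and only if $\gcd(n,k) = 1$, if and only if $[K]$ generates $H_1(Y)$, i.e.\ $K$ is primitive. The converse direction is immediate from the same computation.

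With the equivalence in hand, Theorem~\ref{GeneralFiberedness} follows at once from \cite[Theorem~6.5]{Boyer2012}; as noted in the footnote accompanying the theorem statement, the Boyer--Gordon--Watson argument carries over verbatim from the lens space case to the general L-space case. The main conceptual content of the statement thus lies in the cited theorem, and the step I expect to be most delicate in a fully written-out proof is the verification that the Boyer--Gordon--Watson proof really uses only the L-space hypothesis on the ambient manifold (together with Floer simplicity of $K$, irreducibility of the complement, and primitivity), and not any special feature of lens spaces.
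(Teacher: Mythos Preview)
Your proposal is correct, and the overall strategy matches the paper exactly: both reduce Theorem~\ref{GeneralFiberedness} to \cite[Theorem~6.5]{Boyer2012} by proving that $[K]^{\perp}=0$ is equivalent to $K$ being primitive.

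The difference lies in how that equivalence is established. The paper proves it as part of a three-way equivalence (Proposition~\ref{prop:Primitive}): $K$ primitive $\Leftrightarrow$ $H_1(M)\cong\mathbb Z$ $\Leftrightarrow$ $[K]^{\perp}=0$, and the step (ii)$\Leftrightarrow$(iii) is carried out geometrically, using Lemma~\ref{lem:DisjointSurf} to represent classes in $[K]^{\perp}$ by knots bounding rational Seifert surfaces disjoint from $K$, and then reading off torsion in $H_1(M)$. Your argument is instead purely algebraic: the kernel of $x\mapsto\mathrm{lk}([K],x)$ has cyclic cokernel in $H_1(Y)$, so $[K]^{\perp}=0$ forces $H_1(Y)$ cyclic, and then an explicit computation in $\mathbb Z/n$ finishes it. Your route is shorter and avoids any topology beyond nondegeneracy of the linking form; the paper's route has the side benefit of simultaneously characterizing primitivity via $H_1(M)$, which feeds into the parallel treatment of semi-primitiveness in Proposition~\ref{Semiprimitive}.
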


Note that we are replacing the primitiveness assumption by a criterion regarding the linking form of $Y$. We briefly review the classical notion of linking forms here. For a more detailed discussion, see~\cite{Melvin2010}, for instance.

\begin{defn}
\label{LinkingForm} The \emph{linking form} of a closed three-manifold $Y$ is the non-degenerate form
\[
lk_{Y} : \mathrm{Tor}_{Y} \times \mathrm{Tor}_{Y} \to \mathbb{Q}/\mathbb{Z}
\]
on the torsion subgroup $\mathrm{Tor}_{Y}$ of $H_1(Y)$ defined by $lk_{Y}(a, b) = \alpha \cdot \tau/n$, where $\alpha$ is any 1-cycle representing $a$ and $\tau$ is any 2-chain bounded by a positive integer multiple $n\beta$ of a 1-cycle $\beta$ representing $b$.
\end{defn}

If $Y$ is surgery on a framed link $L$, then $lk_{Y}$ is computed from the linking matrix $A$ of $L$, with framings on the diagonal, as follows. First use a change of basis to transform $A$ into a block sum $\mathbb{O} \oplus \mathbb{A}$. Here, $\mathbb{O}$ is a zero matrix and $\mathbb{A}$ is nonsingular. This corresponds to a sequence of handle slides in the Kirby diagram \cite{Gompf1999}, transforming $L$ into $L_{\mathbb{O}} \cup L_{\mathbb{A}}$. Now following~\cite{Seifert}, the linking form $lk_{Y}$ is presented by the matrix $\mathbb{A}^{-1}$ with respect to the generators of $\mathrm{Tor}_{Y}$ given by the class of the meridians of the components of $L_{\mathbb{A}}$.

To see that Theorem~\ref{GeneralFiberedness} is a reformulation of \cite[Theorem~6.5]{Boyer2012}, we start by the following lemma about $[K]^{\perp}$:

\begin{lemma}\label{lem:DisjointSurf}
Suppose that $K$ is a knot in a rational homology sphere $Y$ and $\overline a\in[K]^{\perp}$. Then there exists a knot $L$ in the complement of $K$, such that $[L]=\overline a$ and $L$ bounds a rational Seifert surface which is disjoint from $K$.
\end{lemma}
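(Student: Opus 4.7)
The plan is to produce $L$ by starting with an arbitrary curve representing $\bar a$ and correcting it by meridians of $K$ so that it becomes torsion in $H_1(M)$, where $M = Y\setminus\nu^\circ(K)$. A rational Seifert surface for such a curve, constructed inside $M$, is then automatically disjoint from $K$.

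A Mayer--Vietoris calculation for $Y = \nu(K)\cup M$ (using that $Y$ is a rational homology sphere) yields $H_1(M;\mathbb Q)\cong\mathbb Q$, generated by the meridian class $[\mu]$. Pick any simple closed curve $L_0$ representing $\bar a$, isotope it off $K$ by general position, and write $[L_0]=c[\mu]$ in $H_1(M;\mathbb Q)$ for some $c\in\mathbb Q$.

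The main step is to identify $c\pmod{\mathbb Z}$ with $lk_Y([K],\bar a)$. Let $\Sigma\subset M$ be a rational Seifert surface for $K$, with $[\partial\Sigma]=p[\lambda']\in H_1(\partial M)$ for some longitude $\lambda'$, where $p$ is the order of $[K]$ in $H_1(Y)$. Using the symmetry of the linking form and Definition~\ref{LinkingForm} applied with $\tau=\Sigma$, one obtains $lk_Y([K],\bar a)=(L_0\cdot\Sigma)/p\pmod{\mathbb Z}$. On the other hand, a slight push-off of $\mu$ into the interior of $M$ meets $\Sigma$ in a collar neighborhood of $\partial M$, where the intersection count equals $[\mu]\cdot[\partial\Sigma]=p$ on $\partial M$; so $\mu\cdot\Sigma=p$. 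Bilinearity of the intersection pairing $H_2(M,\partial M;\mathbb Q)\otimes H_1(M;\mathbb Q)\to\mathbb Q$ then gives $L_0\cdot\Sigma=cp$. Comparing, $c\equiv lk_Y([K],\bar a)\pmod{\mathbb Z}$, and the assumption $\bar a\in[K]^\perp$ forces $c\in\mathbb Z$.

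Once $c\in\mathbb Z$, I would take $|c|$ pairwise disjoint meridional circles $\mu_1,\dots,\mu_{|c|}$ of $K$ in $M$, oriented so that the sum of their classes is $c[\mu]$, and successively band-sum them into $L_0$ through bands embedded in $M$ to obtain a single simple closed curve $L\subset M$. Since each meridian bounds a disk in $\nu(K)$, $[\mu]=0$ in $H_1(Y)$, so $[L]=[L_0]=\bar a$ in $H_1(Y)$; and in $H_1(M;\mathbb Q)$ one has $[L]=[L_0]-c[\mu]=0$, so $[L]$ is torsion in $H_1(M;\mathbb Z)$. A rationally null-homologous knot in a compact oriented 3-manifold admits a rational Seifert surface by the standard construction (realize the Poincar\'e--Lefschetz dual class by a map to $S^1$ and take a regular preimage), yielding a properly embedded oriented surface $F\subset M\setminus\nu^\circ(L)$ whose boundary is a collection of coherently oriented parallel copies of a longitude of $L$; by construction, $F$ is disjoint from $K$. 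The principal obstacle is the linking-number identification in the third paragraph; the remaining steps are routine homological bookkeeping.
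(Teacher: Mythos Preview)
Your argument is correct and follows the same strategy as the paper's: pick a representative of $\bar a$ disjoint from $K$, use $\bar a\in[K]^\perp$ to see that its class in $H_1(M;\mathbb Q)$ is an \emph{integer} multiple of $[\mu]$, and then band-sum with meridians to kill that multiple. The paper finishes slightly differently---it takes a rational Seifert surface for $L$ in $Y$ and tubes away its (algebraically cancelling) intersections with $K$, whereas you build the surface directly inside $M$---but these are equivalent. One small slip: the meridians should be oriented so that their classes sum to $-c[\mu]$, not $c[\mu]$, so that $[L]=[L_0]-c[\mu]=0$ in $H_1(M;\mathbb Q)$.
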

\begin{proof}
Let $p$ be the order of $[K]$ in $H_1(Y)$. There exists a rational Seifert surface $F$ for $K$ so that the intersection number of $\partial F$ with the meridian of $K$ is $p$. Let $L'\subset Y$ be a knot representing $\overline a$. We may assume $L'$ is disjoint from $K$. Since $lk_{Y}([K],\overline a)=0$, the algebraic intersection number of $L'$ with $F$ is a multiple of $p$. Performing connected sums of $L'$ with copies of the meridian of $K$, we can get a new knot $L$ disjoint from $K$, so that $L$ still represents $\overline a$ and the algebraic intersection number of $L$ with $F$ is zero. Hence any rational Seifert surface $G$ for $L$ has algebraic intersection number zero with $K$. Consequently, by removing the intersection points of $G$ with $K$ by adding tubes to $G$, we get a rational Seifert surface for $L$ that is disjoint from $K$.
\end{proof}

\noindent Lemma~\ref{lem:DisjointSurf} yields the following elementary characterization of primitive knots.

\begin{prop}\label{prop:Primitive}
Suppose that $K$ is a knot in a rational homology sphere $Y$, $M=Y\setminus\nu^{\circ}(K)$. Then the following three conditions are equivalent:
\newline(i) $K$ is primitive.
\newline(ii) $H_1(M)\cong\mathbb Z$.
\newline(iii) $[K]^{\perp}=0$.
\end{prop}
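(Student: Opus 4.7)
The plan is to establish $(i)\Leftrightarrow(iii)$ directly from the non-degeneracy of the linking form, and then $(ii)\Leftrightarrow(iii)$ using Lemma~\ref{lem:DisjointSurf}; together these give the full equivalence. The key preliminary I would set up is the short exact sequence
\[
0 \longrightarrow \mathbb Z\langle\mu\rangle \longrightarrow H_1(M) \longrightarrow H_1(Y) \longrightarrow 0,
\]
arising from the long exact sequence of the pair $(Y,M)$ together with the excision isomorphism $H_*(Y,M)\cong H_*(\nu(K),\partial\nu(K))$ and the vanishing $H_2(Y)=0$ (since $Y$ is a rational homology sphere). This records simultaneously that $\mu$ has infinite order in $H_1(M)$ and that $H_1(M)$ has rank one, so $H_1(M)\cong\mathbb Z\oplus T$ with $T=\mathrm{Tor}(H_1(M))$.

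For $(i)\Leftrightarrow(iii)$, I would use non-degeneracy of the linking form $\mathrm{lk}_Y$ on $H_1(Y)$. If $[K]$ generates $H_1(Y)\cong\mathbb Z/p$, then every class is a multiple of $[K]$, so any $a\in[K]^\perp$ pairs trivially with every element of $H_1(Y)$ and must vanish. Conversely, if $[K]^\perp=0$ then $\mathrm{lk}_Y([K],\cdot)\colon H_1(Y)\to\mathbb Q/\mathbb Z$ is injective with image inside the order-$p$ subgroup $\tfrac{1}{p}\mathbb Z/\mathbb Z$; hence $|H_1(Y)|\le p$, and since $p$ divides $|H_1(Y)|$, equality holds and $[K]$ is a generator.

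For $(iii)\Rightarrow(ii)$, take $t\in T$ and represent it by a knot $L\subset M$ with $nL$ bounding a 2-chain $\tau\subset M$ for some $n>0$. Since $\tau$ is disjoint from $K$, the definition of the linking form gives $\mathrm{lk}_Y([K],[L]_Y)=0$, and $[L]_Y$ is moreover torsion in $H_1(Y)$, so $[L]_Y\in[K]^\perp=0$. The exact sequence above then forces $[L]=k\mu$ in $H_1(M)$ for some $k$, but $\mu$ is torsion-free, so the torsion class $t=[L]$ must vanish. Hence $T=0$ and $H_1(M)\cong\mathbb Z$. For $(ii)\Rightarrow(iii)$, given $a\in[K]^\perp$, Lemma~\ref{lem:DisjointSurf} produces a knot $L\subset M$ with $[L]_Y=a$ bounding a rational Seifert surface inside $M$; thus $[L]$ is torsion in $H_1(M)=\mathbb Z$, forcing $[L]=0$ and therefore $a=[L]_Y=0$.

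Once the exact sequence is in place, each implication is short; the only real inputs are non-degeneracy of the linking form and Lemma~\ref{lem:DisjointSurf}, both of which are standard or already proved. The step that needs the most care is $(iii)\Rightarrow(ii)$: one must verify that $\mathrm{lk}_Y([K],[L]_Y)$ genuinely vanishes in $\mathbb Q/\mathbb Z$ (not merely lies in $\tfrac1n\mathbb Z/\mathbb Z$), and this is precisely where the disjointness of the bounding surface $\tau$ from $K$ is essential.
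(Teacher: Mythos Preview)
Your proof is correct. The route differs from the paper's in one structural way: the paper links (i) and (ii) directly, identifying $\mathrm{Tor}_M$ with $H_1(Y,K)$ via the Universal Coefficients Theorem and Poincar\'e duality, and then shows (ii)$\Leftrightarrow$(iii); you instead link (i) and (iii) directly through non-degeneracy of the linking form and a simple cardinality count, and then show (ii)$\Leftrightarrow$(iii). Your (i)$\Leftrightarrow$(iii) argument is more elementary---it avoids UCT and duality entirely---and the counting step $|H_1(Y)|\le p$ together with $p\mid|H_1(Y)|$ is a clean way to force cyclicity. For (ii)$\Rightarrow$(iii) both proofs invoke Lemma~\ref{lem:DisjointSurf} identically. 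For the remaining implication the paper argues by contrapositive (a nonzero torsion class in $H_1(M)$ maps to a nonzero element of $[K]^\perp$), whereas you argue directly (a torsion class in $H_1(M)$ has image in $[K]^\perp=0$, hence lies in $\mathbb Z\langle\mu\rangle$, hence vanishes); these are the same idea read in opposite directions. The short exact sequence you set up at the outset is exactly the sequence the paper uses in its (iii)$\Rightarrow$(ii) step, so the underlying homological input is identical.
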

\begin{proof}
\noindent(i)$\Leftrightarrow$(ii). By definition, $K$ is primitive is equivalent to the condition that the map $\iota_K: H_1(K)\to H_1(Y)$ is surjective. Using the Mayer--Vietoris sequence for the pair $(Y,K)$, we see that the surjectivity of $\iota_K$ is equivalent to $H_1(Y,K)=0$. We have $H_1(M)\cong\mathbb Z\oplus\mathrm{Tor}_M$. By the Universal Coefficients Theorem, $\mathrm{Tor}_M$ is isomorphic to $H^2(M)$, which is (by Poincar\'e duality) isomorphic to $H_1(Y,K)$. Hence $K$ is primitive is equivalent to $H_1(M)\cong\mathbb Z$.

\noindent(ii)$\Rightarrow$(iii). Suppose that $\overline a\in[K]^{\perp}$ and let $L$ be a knot as in Lemma~\ref{lem:DisjointSurf}. Then $L$ represents a torsion element in $H_1(M)$. Since $H_1(M)\cong\mathbb Z$, $L$ is null-homologous in $M$. Hence, it is also null-homologous in $Y$. This means $\overline a=0$.

\noindent(iii)$\Rightarrow$(ii). If $H_1(M)\not\cong\mathbb Z$, then $H_1(M)$ contains a nonzero torsion element $a$. Let $L\subset M$ be a knot representing $a$. Then $L$ has a rational Seifert surface in $M$. Let $\overline a\in H_1(Y)$ be represented by $L$. By definition, $\overline a\in [K]^{\perp}$. Consider the long exact sequence $$0\to H_2(Y,M)\to H_1(M)\to H_1(Y).$$
Since $a$ is torsion, it is not contained in the image of $H_2(Y,M)\cong\mathbb Z$. So $\overline a$, being the image of $a$ in $H_1(Y)$, is nonzero. This contradicts the assumption that $[K]^{\perp}=0$.
\end{proof}

Recall from Section~\ref{sec:S1S2} that a rational homology solid torus $M$ is semi-primitive if $\mathrm{Tor}_M$ is contained in the image of $\iota : H_1(\partial M) \to H_1(M)$.
Similar to Proposition~\ref{prop:Primitive}, we have a characterization of semi-primitiveness in terms of the linking form of $Y$.

\begin{prop}
\label{Semiprimitive} Let $K$ be a knot in a rational homology sphere $Y$, $M=Y\setminus\nu^{\circ}(K)$. Then $M$ is semi-primitive if and only if $[K]^{\perp} \subset \langle [K] \rangle$.
\end{prop}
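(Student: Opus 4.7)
The plan is to translate the condition $[K]^{\perp}\subset\langle[K]\rangle$, which lives in $H_1(Y)$, into a condition in $H_1(M)$ that can be compared directly with the definition of semi-primitiveness. Writing $\iota\colon\partial M\hookrightarrow M$ and $j\colon M\hookrightarrow Y$ for the inclusions, the key identifications I will establish are
\[
[K]^{\perp} = j_*(\mathrm{Tor}_M) \quad \text{and} \quad \langle[K]\rangle = j_*(\iota(H_1(\partial M))).
\]
The proposition will then reduce to the purely algebraic statement that $j_*(\mathrm{Tor}_M)\subseteq j_*(\iota(H_1(\partial M)))$ if and only if $\mathrm{Tor}_M\subseteq \iota(H_1(\partial M))$.

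For the first identification, the inclusion $j_*(\mathrm{Tor}_M)\subseteq [K]^{\perp}$ is essentially the definition of the linking form: given $x\in\mathrm{Tor}_M$, represent it by an embedded knot $L\subset M$; since $x$ is torsion, some integer multiple of $L$ bounds a $2$-chain $\tau$ inside $M$, and $K\cdot\tau=0$ because $K$ and $M$ are disjoint, so $lk_Y([K],j_*(x))=0$. The reverse inclusion $[K]^{\perp}\subseteq j_*(\mathrm{Tor}_M)$ is precisely Lemma~\ref{lem:DisjointSurf}, which provides for each $\overline a \in [K]^{\perp}$ a representative knot in $M$ bounding a rational Seifert surface in $M$, i.e.\ a torsion class in $H_1(M)$ lifting $\overline a$. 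The second identification is immediate: $\iota(H_1(\partial M))=\langle\mu,\lambda\rangle$ in $H_1(M)$, while $j_*(\mu)=0$ (meridian bounds a disk in $\nu(K)$) and $j_*(\lambda)=[K]$ (the longitude is isotopic to $K$ in $Y$).

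To close the argument I will invoke the long exact sequence of the pair $(Y,M)$. By excision $H_*(Y,M)\cong H_*(\nu(K),\partial\nu(K))$, so $H_2(Y,M)\cong\mathbb{Z}$ is generated by a meridian disk with boundary $\mu$ and $H_1(Y,M)=0$; combined with $H_2(Y)=0$ this yields the short exact sequence
\[
0\to \mathbb{Z}\xrightarrow{\mu} H_1(M)\xrightarrow{j_*} H_1(Y)\to 0,
\]
so $\ker j_*=\langle\mu\rangle\subseteq \iota(H_1(\partial M))$. This kernel containment forces, for any subgroup $A\subseteq H_1(M)$, the equivalence $j_*(A)\subseteq j_*(\iota(H_1(\partial M)))\iff A\subseteq \iota(H_1(\partial M))$. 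Applying this with $A=\mathrm{Tor}_M$ completes the proof. The only nontrivial input is Lemma~\ref{lem:DisjointSurf}; everything else is a formal manipulation parallel to the proof of Proposition~\ref{prop:Primitive}, so I do not expect any real obstacle.
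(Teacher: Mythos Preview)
Your proof is correct and rests on the same two observations as the paper's: that $[K]^{\perp}$ coincides with $j_*(\mathrm{Tor}_M)$ (one inclusion by the definition of the linking form, the other by Lemma~\ref{lem:DisjointSurf}), and that $j_*(\iota(H_1(\partial M)))=\langle[K]\rangle$. The difference is in how the two directions are tied together. The paper argues each implication by hand with explicit surfaces: given $a\in\mathrm{Tor}_M$ with $j_*(a)=r[K]$, it builds a $2$--chain in $Y$ cobounded by a representative of $a$ and $r$ parallel copies of $K$, then cuts it along $\partial M$ to see that the discrepancy in $H_1(M)$ is a multiple of $\mu$; and conversely. You instead isolate the algebraic content once and for all via the short exact sequence $0\to\mathbb Z\xrightarrow{\mu}H_1(M)\xrightarrow{j_*}H_1(Y)\to 0$, from which $\ker j_*=\langle\mu\rangle\subset\iota(H_1(\partial M))$ immediately gives the lifting criterion $j_*(A)\subset j_*(\iota(H_1(\partial M)))\iff A\subset\iota(H_1(\partial M))$. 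This packaging is cleaner and handles both directions simultaneously; the paper's version has the virtue of making the underlying surfaces visible, but your argument loses nothing of substance.
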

\begin{proof}
First assume that $[K]^{\perp} \subset \langle [K] \rangle$. For $a \in \mathrm{Tor}_M$, let $L\subset M$ be a knot representing $a$. Let also $F$ be a rational Seifert surface that $L$ bounds in $M$. Note that $F\subset Y$ is disjoint from $K$. Let $\overline a\in H_1(Y)$ be represented by $L$. By the definition of the linking form, $\overline a \in [K]^{\perp}$. Using the assumption $[K]^{\perp} \subset \langle [K] \rangle$, we get that $\overline a$ is homologous to $r[K]$ for some integer $r$. Therefore, there exists an oriented surface $F'$ that is co-bounded, on one side by $L$, and on the other side by $r$ parallel copies of $K$. Note that $F' \cap M$ is a surface $F''$ with boundary consisting of $\partial F'$ and meridian circles of $K$ which come from the intersection of $K$ with $F'$. Thus $[L]$ is homologous to $r[K]$ plus a multiple of the meridian of $K$ through the surface $F''$.
Consequently, $a$ can be written as a sum of $r$ longitudes of $K$ with a multiple of the meridian of $K$ in $H_1(M)$. That is, $a \in \text{ im}(\iota : H_1(\partial M) \to H_1(M))$.

Now suppose that $M$ is semi-primitive. Take an element $\overline a\in[K]^{\perp}$. Let $L$ be a knot as in Lemma~\ref{lem:DisjointSurf}. Suppose that $L$ represents $a\in H_1(M)$. Since $L$ has a rational Seifert surface in $M$, we get $a \in \mathrm{Tor}_M$. By the semi-primitiveness of $M$, $a$ is in the image of $\iota$. So $L$ cobounds an oriented surface $F'$ with a curve $L'$ in $\partial M$. Consider the map $H_1(M)\to H_1(Y)$ and observe that the image of $H_1(\partial M)$ in $H_1(Y)$ is generated by $[K]$. Thus, we get that $\overline a=[L']$ is a multiple of $[K]$ in $H_1(Y)$.
\end{proof}

Recall that if a knot $L\subset S^1 \times S^2$ admits a Dehn surgery to an L-space $Y$, then $L$ is fibered (Theorem~\ref{thm:S1S2fiber}). As in the proof of Theorem~\ref{thm:S1S2fiber}, $M=(S^1\times S^2)\setminus\nu^{\circ}(L)$ is semi-primitive. By Proposition~\ref{Semiprimitive},
the dual knot $K\subset Y$ has the property that $[K]^{\perp}\subset\langle [K]\rangle$. In the light of this fact, together with Theorem~\ref{GeneralFiberedness}, we make the following conjecture:
\begin{conj} \label{conj}
Let $Y$ be an L-space, and let $K\subset Y$ be a Floer simple knot with irreducible complement. If $[K]^{\perp} \subset \langle [K] \rangle$, then $K$ is fibered.
\end{conj}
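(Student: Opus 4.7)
The plan is to follow the strategy of Theorem~\ref{thm:S1S2fiber} as closely as possible. First, by Proposition~\ref{Semiprimitive}, the hypothesis $[K]^{\perp} \subset \langle [K] \rangle$ is equivalent to the knot exterior $M = Y \setminus \nu^\circ(K)$ being semi-primitive, i.e., the torsion subgroup $\mathrm{Tor}_M \subset H_1(M)$ lies in the image of $\iota : H_1(\partial M) \to H_1(M)$. The goal is then to show that $M$ fibers over $S^1$, which is equivalent to $K$ being fibered.

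The primary route I would take is to verify that $M$ is a \emph{generalized solid torus} in the sense of Definition~7.2 of~\cite{Rasmussen2015}, and then conclude by Corollary~7.12 of the same that $M$ fibers over $S^1$. The inputs to feed into their framework are: the L-space filling of $M$ provided by $Y$ itself; Floer simplicity of $K$; irreducibility of $M$; and the semi-primitive condition. In the proof of Theorem~\ref{thm:S1S2fiber}, the crucial structural input was the existence of a reducible filling (namely $S^1 \times S^2$) that exhibited the semi-primitivity geometrically; in the conjectural setting semi-primitivity is only assumed algebraically, and the first task is to show that this still suffices to trigger Proposition~7.8 of~\cite{Rasmussen2015} (or an appropriate analogue).

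In parallel, I would pursue a Heegaard-Floer-theoretic alternative via Theorem~\ref{SurfaceBundle}: it is enough to prove that the null surgery $Y_\alpha$ is a surface bundle over $S^1$. Using the exact triangle of Theorem~\ref{ExactTriangle} and the machinery of Proposition~\ref{prop:ZeroSurgIsom}, one would aim to show $HF^+(Y_\alpha, \mathfrak s) \cong \mathbb{Z}$ for the Spin$^c$ structure $\mathfrak s$ on $Y_\alpha$ satisfying $\langle c_1(\mathfrak s), [\widehat F]\rangle = \chi(\widehat F)$, where $F$ is a minimal genus rational Seifert surface for $K$ and $\widehat F$ is its extension to $Y_\alpha$. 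A rank-one statement of this form, combined with the Heegaard Floer detection of fiberedness for surface bundles, would yield the conclusion. The Floer simplicity of $K$ should control the vertical map $v^+_\xi$ appearing in the large surgery formula, while the semi-primitive hypothesis should control the Spin$^c$ orbits $[\mathfrak t]_{Y_n}$ and $[\mathfrak s_{\mathfrak t}]_Y$ in the surgery exact sequence, in the spirit of Lemma~\ref{lem:OneToOne}.

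The hardest step, in either route, will be extracting the full force of the semi-primitive condition through the Spin$^c$ bookkeeping. In Theorem~\ref{thm:S1S2fiber}, an explicit Kirby-calculus description of $L \cup U$ in $S^3$ gave direct control over the linking form of $Y$; in the conjectured generality no such model is at hand. Moreover, when the null slope $\alpha$ fails to be a framing, the reduction of Corollary~\ref{cor:ReduceToMorse} forces us to pass to the knot $K \# O_{p'/r} \subset Y \# L(p',r)$, and it is not clear that semi-primitivity of $M$ implies an analogous condition for the complement of this connected sum. Resolving this is likely to require a new algebraic input relating the linking form of $Y$ to that of $Y \# L(p',r)$, in a form compatible with the Heegaard Floer arguments of Proposition~\ref{prop:ZeroSurgIsom} and Theorem~\ref{SurfaceBundle}; this is the step I would expect to be the principal obstacle.
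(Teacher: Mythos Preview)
The statement you are addressing is labeled \emph{Conjecture}~\ref{conj} in the paper and appears in the section ``Directions for future research''; the paper offers no proof of it. There is therefore no paper proof to compare your proposal against.

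Your proposal is, correspondingly, not a proof but a research outline: you describe two possible routes (via \cite[Proposition~7.8 and Corollary~7.12]{Rasmussen2015}, or via Theorem~\ref{SurfaceBundle} and Proposition~\ref{prop:ZeroSurgIsom}) and you explicitly flag the steps you do not know how to carry out. In particular, you do not establish that a semi-primitive, Floer simple, irreducible $M$ with an L-space filling is a generalized solid torus; Proposition~7.8 of \cite{Rasmussen2015} requires as input an $S^1\times S^2$ (or more generally a Heegaard-Floer-reducible) filling, which is precisely what the conjectural hypothesis does not supply. Likewise, along the second route you would need to prove that $Y_\alpha$ is a surface bundle \emph{before} invoking Theorem~\ref{SurfaceBundle}, and nothing in the Floer-simple hypothesis alone forces the relevant $HF^+(Y_\alpha,\mathfrak s)$ to have rank one. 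These are genuine open problems, not omitted details, and your write-up already identifies them as such. In short: your assessment of the difficulties is accurate, and the conjecture remains open.
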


Using Proposition~\ref{Semiprimitive}, we see that Conjecture~\ref{conj} could be equivalently stated as a generalization of \cite[Theorem~6.5]{Boyer2012} where the primitiveness assumption is replaced by the semi-primitiveness of the knot.

\subsection{``Positivity" of knots in $S^1\times S^2$ admitting L-space surgeries}
In another direction, it is known that for a knot $K \subset S^3$ with some L-space surgery, $K$ is a \emph{strongly quasipositive knot}. Let $B_n$ denote the braid group on $n$ strands, with generators $\sigma_1, \sigma_2, \cdot \cdot \cdot \sigma_{n-1}$. A strongly quasipositive link is a link that can be realized as the closure of the braid word
\[
\beta = \prod_{k=1}^m \sigma_{i_k, j_k},
\]
where $\sigma_{i,j}$ is of the form \begin{equation}\label{SQP}(\sigma_i \cdot \cdot \cdot \sigma_{j-2})\sigma_{j-1}(\sigma_i \cdot \cdot \cdot \sigma_{j-2})^{-1}.\end{equation}
There is a weaker notion of positivity called {\it quasipositivity} where the braid word $\beta$ is the multiple of arbitrary conjugates of positive generators in $B_n$ (whereas strongly quasipositive knots require these conjugates to be of a special form). That is, for quasipositive links, $(\sigma_i \cdot \cdot \cdot \sigma_{j-2})$ in~\eqref{SQP} is replaced by an arbitrary braid word. There is a more geometric, yet equivalent, definition of quasipositive links. Every such a link is a transverse $\mathbb C$-link, that is, it arises as the transverse intersection of $S^3 \subset \mathbb C^2$ with a complex plane curve $f^{-1}(0)\subset \mathbb C^2$, where $f$ is a non-constant polynomial. Algebraic links of singularities form a proper subfamily of quasipositive links. See, for instance, \cite{Boileau2001, Hedden2010, Rudolph1983}. For a non-null-homologous knot $L\subset S^1 \times S^2$ with fibered exterior we know that $L$ is isotopic to a spherical braid \cite[Lemma~1.18]{Baker2013}.
{\question \label{question}Given a knot $L\subset S^1 \times S^2$ that admits an L-space surgery, is there a notion of positivity for $L$ as a spherical braid?}
\\

The fact that the knot $L$ is indeed a spherical braid follows from Theorem~\ref{thm:S1S2fiber}. It should be noted that considering the obvious notion of positivity, by assigning a sign to each crossing, does not work in this setting. For if $\sigma$ is a generator of the spherical braid group, then $\sigma \sim \sigma^{-1}$. Here ``$\sim$" denotes an isotopy between braids.

\bibliographystyle{amsalpha2}

\bibliography{Reference}

\end{document}